\documentclass[10pt]{amsart}

\usepackage{amsmath, amsfonts, amsthm, amssymb, graphicx, fullpage, enumerate, float, caption, subcaption, tikz, hyperref, scalerel, boldline, makecell, longtable, array}
\usepackage{algorithm}
\usepackage{algpseudocode}
\usetikzlibrary{patterns}
\usetikzlibrary{graphs}  
\usetikzlibrary{calc}
\newtheorem{theorem}{Theorem}[section]        
\newtheorem{lemma}[theorem]{Lemma}
\newtheorem{corollary}[theorem]{Corollary}

\theoremstyle{remark}      
\newtheorem*{rem}{Remark} 
     
\theoremstyle{definition}

\def\N{\mathbb{N}}
     
\def\Q{\mathbb{Q}}          
\def\R{\mathbb{R}}     
\def\Z{\mathbb{Z}}



\def\sq{\square}

\begin{document}
\title{The sum-product problem for small sets II}  
\author{Phillip Antis, \quad Holden Britt, \quad Caleigh Chapman, \\ Elizabeth Hawkins, \quad Alex Rice, \quad Elyse Warren}
 
\begin{abstract} We establish that every set of $k=10$ natural numbers determines at least $30$ distinct pairwise sums or at least $30$ distinct pairwise products, as well as the analogous result for $k=11$ and at least $34$ sums/products, with sharpness uniquely (up to scaling) exhibited  by $\{1, 2, 3, 4, 6, 8, 9, 12, 16, 18\}$ and $\{1, 2, 3, 4, 6, 8, 9, 12, 16, 18, 24\}$, respectively. This extends previous work of the fifth author with Clevenger, Havard, Heard, Lott, and Wilson \cite{SP2023}, which established the corresponding thresholds for $k\leq 9$. Included are several classification results, including for sets of $10$ real numbers (resp. positive real numbers) determining at most $29$ pairwise sums (resp. pairwise products) that do not contain $8$ elements of any single arithmetic progression (resp. geometric progression), as well as some observations controlling additive quadruples in small subsets of two-dimensional generalized geometric progressions.  
 
\end{abstract}
 
\address{Department of Mathematics, Millsaps College, Jackson, MS 39210}   
\email{antispj@millsaps.edu}
\email{britthm@millsaps.edu}
\email{chapmce@millsaps.edu} 
\email{hawkie@millsaps.edu} 
\email{riceaj@millsaps.edu}
\email{warreek@millsaps.edu}

\maketitle   
\setlength{\parskip}{5pt}   
 
\section{Introduction}

Beginning with work of Erd\H{o}s and Szemer\'edi \cite{ES} in the early 1980s, a central question in additive combinatorics, broadly known as the \textit{sum-product problem}, concerns the extent to which finite subsets of the integers (or other rings) can simultaneously determine relatively few distinct pairwise sums and relatively few distinct pairwise products. As usual, we define $A+A=\{a+b:a,b\in A\}$ and $AA=\{ab:a,b\in A\}$, which make sense in any ring but we apply with $A\subseteq \R$. Using $|X|$ to denote the number of elements of a finite set $X$, it is a standard exercise to verify that if $A\subseteq\R$ and $|A|=k$, then $2k-1\leq |A+A| \leq (k^2+k)/2$, with equality on the left if and only if $A$ is an \textit{arithmetic progression}, a set of the form $\{x,x+d,\dots,x+(k-1)d\}$ with $d\neq 0$, and equality on the right if and only if every sum $a+b$ with $a,b\in A$, $a\geq b$ is distinct, in which case we say $A$ is a \textit{Sidon set}. Further, these inequalities transfer to product sets for $A\subseteq (0,\infty)$, as we can let $L=\{\log(a): a\in A\}$, so $AA = \{e^{a+b}: a,b\in L\}$, hence $|AA|=|L+L|$, and $L$ is an arithmetic progression if and only if $A$ is a \textit{geometric progression}, a set of the form $\{x,rx,\dots,r^{k-1}x\}$ with $r\neq 1$.

We encapsulate the sum-product problem in the positive integers by defining $$SP(k)=\min_{\substack{A\subseteq \N \\ |A|=k}}\left(\max\{|A+A|,|AA|\}\right)$$ for $k\in \N$. In other words, $SP(k)$ is the greatest integer $n$ such that every set of $k$ positive integers must determine at least $n$ distinct sums or at least $n$ distinct products. The majority of sum-product literature concerns the asymptotic growth rate of $SP(k)$, or related quantities, the most coveted conjecture \cite{ES} being $SP(k) = k^{2-o(1)}$, where $o(1)$ indicates a quantity tending to $0$ as $k\to \infty$. In other words, it is believed that at least one of $A+A$ and $AA$ must be roughly as big as possible.  Following decades of incremental progress, mostly through connections with incidence geometry (see, in chronological order, \cite{Nath}, \cite{Elekes}, \cite{Ford}, \cite{Soly}, \cite{KS2}, \cite{KS1}, \cite{Shak}, \cite{RudSS}, \cite{Bloom}), the current best-known lower bound is $SP(k)\geq k^{\frac{4}{3}+\frac{10}{4407}-o(1)}$, due to Cushman \cite{Cushman}.

Recently, more elementary methods have been applied to sum-product type questions for small sets. O'Bryant \cite{KOB} constructed an extensive dataset of pairs $(|A+A|,|AA|)$ for $A\subseteq \N$ with $|A|=k$, which is at least $84\%$ complete for all $k\leq 32$, but only known to be complete for $k\leq 6$. Previous work of the fifth author with Clevenger, Havard, Heard, Lott, and Wilson \cite{SP2023} more specifically pursued exact values of $SP(k)$ for small values of $k$, with results summarized below.

\begin{theorem}\cite[Theorem 1.1]{SP2023}\label{oldSP} We have the following exact values for $SP(k)$: $$SP(k)=\begin{cases} 3k-3, & 2\leq k\leq 7 \\ 3k-2, & k=8,9 \end{cases}. $$
\end{theorem}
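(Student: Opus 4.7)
\textbf{Proof plan for Theorem~\ref{oldSP}.} The plan splits into matching upper and lower bounds.

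For the upper bound, one exhibits explicit $k$-element sets $A_k\subseteq\N$ realizing the claimed values. The natural family consists of initial segments of the $3$-smooth sequence $1,2,3,4,6,8,9,12,\ldots$ (integers of the form $2^a3^b$ listed in increasing order): one takes $A_k=\{1,2,3,4,6,8,12\}$ for $k=7$, and so on, with one- and two-element extensions giving the cases $k=8,9$. These sets sit near an arithmetic progression (keeping $|A_k+A_k|$ small) while being contained in a two-dimensional geometric progression (keeping $|A_kA_k|$ small); a direct listing of their sum and product sets confirms the claimed maxima $3k-3$ for $2\le k\le 7$ and $3k-2$ for $k\in\{8,9\}$.

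For the lower bound at $k\le 7$, one argues by contradiction. Assume $|A|=k$ with $|A+A|\le 3k-4$ and $|AA|\le 3k-4$. Freiman's $3k-4$ theorem, applied to $A$ additively, places $A$ inside an arithmetic progression $P$ of length at most $|A+A|-k+1\le 2k-3$. Applied to $\log A$ (using $|AA|=|\log A+\log A|$), it also places $A$ inside a geometric progression $Q$ of length at most $2k-3$. After an affine normalization sending $P$ to $\{0,1,\ldots,2k-4\}$, the problem reduces to bounding $|P\cap Q|$ where $Q$ is a GP of length $\le 2k-3$. A direct enumeration, tractable for small $k$, shows $|P\cap Q|<k$, contradicting $A\subseteq P\cap Q$.

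For $k=8,9$, the threshold shifts from $3k-4$ to $3k-3$, and Freiman's $3k-4$ theorem is no longer sharp enough. The fix is a refined Freiman-type classification for sets with $|A+A|\le 3|A|-3$: either $A$ lies in an AP of length $\le 2|A|-2$, or $A$ splits into two parallel APs sharing a common difference. Applying this simultaneously to sums and to products produces an expanded but still tractable case list, each sub-case being ruled out by the same kind of enumeration of short APs inside short GPs (and vice versa).

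\textbf{Main obstacle.} The combinatorial bottleneck is the enumeration of short GPs inside short APs, the essential arithmetic content being the scarcity of simultaneous additive and multiplicative structure in $\R_{>0}$. For $k=8,9$ this is further complicated by the expanded case list from the refined classification, in particular by the ``two-AP'' sub-case, which must be handled carefully to explain precisely why the threshold jumps from $3k-3$ to $3k-2$ at $k=8$; any attempt to push beyond $k=9$ (as the present paper does) will have to contend with a substantially richer collection of near-extremal configurations.
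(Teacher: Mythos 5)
Your proposal is correct in its broad contours — explicit extremal examples for the upper bound, Freiman's structure theorems for the lower bound — but the route you take for the lower bound is genuinely different from the one in \cite{SP2023} (as summarized in the present paper), and it contains a gap that the paper's route is designed to avoid.

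The paper's approach is \emph{asymmetric}: it applies Theorems~\ref{Fclass1} and~\ref{Fclass2} \emph{only on the product side}, concluding that if $|AA|$ is small then $A$ lies in one geometric progression, or in a union of two GPs with common ratio. It then lower-bounds $|A+A|$ for such sets directly, the key tool being Lemma~\ref{sg}: a subset of a GP with rational ratio $r\neq -2$ is a Sidon set, so $|A+A|=(k^2+k)/2$, which overshoots the target by a wide margin. Your approach is \emph{symmetric}: you apply Freiman to both $A+A$ and $AA$, putting $A$ inside an AP $P$ and a GP $Q$ simultaneously, and then try to bound $|P\cap Q|$.

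The gap is in the normalization step. You write that ``after an affine normalization sending $P$ to $\{0,1,\ldots,2k-4\}$, the problem reduces to bounding $|P\cap Q|$ where $Q$ is a GP.'' But an affine map $x\mapsto (x-x_0)/d$ that sends $P$ to $\{0,1,\ldots,2k-4\}$ sends $Q$ to a \emph{translated} GP $\{(q-x_0)/d: q\in Q\}$, which is not a GP unless $x_0=0$. So after your normalization you are no longer intersecting an integer AP with a GP, and the ``direct enumeration'' you invoke is not well-defined as stated. Even in the un-normalized form, proving that a short AP and a short GP of reals can share only few elements is not a finite check: both families depend on continuous parameters, and what makes the overlap small is precisely the Diophantine input encoded in Lemma~\ref{sg} via the rational root theorem. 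Without that ingredient (or something playing its role), the enumeration claim doesn't close. In addition, your plan for $k=8,9$ must confront not only the two-AP subcase but also the two-GP subcase on the product side, and the interaction of these with the normalization issue above; the paper sidesteps this by never needing to align the additive and multiplicative progressions in a common coordinate system.
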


Crucial ingredients to the proof of Theorem \ref{oldSP} are classification results of Freiman, originally proven in the integers but valid in any torsion-free abelian group, which describe the structure of sets with close to as few distinct sums as possible. 

\begin{theorem}\cite[p. 11]{Frei73} \label{Fclass1} If $A\subseteq \R$ with $|A|=k$ and $|A+A|\leq 2k-1+b\leq 3k-4$, then $A$ is contained in a $(k+b)$-term arithmetic progression. 
\end{theorem}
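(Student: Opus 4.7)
The plan is to first reduce, by translation, to $A = \{0 = a_1 < a_2 < \cdots < a_k\}$, in which case the $2k-1$ ``border'' sums
\[
a_1 + a_1 < a_1 + a_2 < \cdots < a_1 + a_k < a_2 + a_k < \cdots < a_k + a_k
\]
form a strictly increasing chain inside $A+A$, so the hypothesis leaves room for at most $b$ additional elements in $A+A$. The overall strategy is to first force $A$ into a one-dimensional arithmetic structure, and then to bound the length of the shortest containing AP.

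The first substantive step is to show that $A$ is commensurable---equivalently, that the $\Q$-subspace $V \subseteq \R$ spanned by $A - A$ is one-dimensional. I would argue by contradiction: if $\dim_\Q V \geq 2$, partition $A$ into at least two cosets of a rank-$1$ subgroup of $V$, and observe that sums across distinct cosets contribute to new ``levels'' of $A+A$ in the corresponding quotient; a careful enumeration produces at least $3k - 3$ distinct sums, contradicting $|A+A| \leq 3k-4$. This is the sharp form of the fact that sets with $|A+A| < 3|A|-3$ have Freiman dimension $1$; in my view it is the technical heart of the theorem and the principal obstacle, since this is precisely where the threshold $3k-4$ is used. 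After rescaling, one may then assume $A \subseteq \Z$ with $a_1 = 0$, $a_k = n$, and $\gcd(A - A) = 1$, reducing the statement to $n \leq k + b - 1$.

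For the remaining integer length bound I would induct on $b$. The base case $b = 0$ is the classical characterization of arithmetic progressions as the sets achieving $|A+A| = 2|A|-1$. For the inductive step, I would remove a carefully chosen extremal element---for instance $a_k$---obtaining $A' = A \setminus \{a_k\}$; the sums lost from $A+A$ are exactly those of the form $a_j + a_k$ not already representable in $A' + A'$, and a careful count of these shows that $|A'+A'| \leq 2(k-1) - 1 + b'$ for some $b' \leq b$, to which the inductive hypothesis applies. It then remains to verify that attaching $a_k$ back to the $(k-1+b')$-AP containing $A'$ extends the containing AP to one of length at most $k+b$; this final check is bookkeeping that follows from tracking the position of $a_k$ relative to $A'$ via the chain structure.
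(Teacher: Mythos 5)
Your two-stage plan (reduce to Freiman dimension one, then bound the diameter of the normalized integer set) is the right skeleton; note the paper itself cites this theorem from Freiman's monograph rather than proving it, so I am evaluating your sketch on its own merits. The difficulty is that the induction you propose for the diameter bound has a genuine gap at the boundary of the theorem's range. When you delete $a_k$, the only sums guaranteed to be lost are $2a_k$ and $a_{k-1}+a_k$ (both exceed $\max(A'+A')=2a_{k-1}$), so the best general bound is $|A'+A'|\leq 2(k-1)-1+b$, i.e.\ $b'$ can equal $b$. But the inductive hypothesis applied to $A'$ — whether you induct on $b$, on $k$, or on $k+b$ — requires $b'\leq(k-1)-3=k-4$, while the original hypothesis permits $b$ as large as $k-3$. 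So in the extremal case $b=k-3$, which is precisely the case the ``$3k-4$ theorem'' is designed to cover, the induction does not close: $|A'+A'|$ can be as large as $3(k-1)-3$, putting $A'$ in the regime of Theorem~\ref{Fclass2}, where $A'$ may be a union of two progressions rather than contained in a short progression. You would need either to show the loss is at least three sums whenever $b\geq 1$ (this requires a separate argument; two lost sums forces $a_{k-2}+a_k=2a_{k-1}$ and further coincidences, but pinning this down is not routine), or to handle the boundary case by invoking Theorem~\ref{Fclass2} and arguing about the reattachment of $a_k$ to a union of two progressions.

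There is a second, related issue in the ``bookkeeping'' reattachment step, which is more substantive than you indicate. Normalizing so that $\gcd(A-A)=1$ does not ensure $\gcd(A'-A')=1$; if $\gcd(A'-A')=d>1$, the $(k-1+b')$-term progression containing $A'$ has common difference a multiple of $d$, and $a_k$ need not lie in it. One can show that this scenario contradicts $|A+A|\leq 3k-4$ — the sums then split across distinct residue classes mod $d$, giving $|A+A|\geq |A'+A'|+k\geq 3k-3$ — but that is an argument, not bookkeeping, and your sketch omits it. The standard proof (Freiman, and see Nathanson's or Lev--Smeliansky's treatments) sidesteps both issues by fixing the normalized model $A\subseteq\{0,\dots,n\}$ with $0,n\in A$ and $\gcd(A)=1$ once and for all, and proving $|A+A|\geq\min(n+k,\,3k-3)$ directly by a residue-class counting argument in $\mathbb{Z}/n\mathbb{Z}$; the bound $n\leq k+b-1$, hence containment in a $(k+b)$-term progression, then falls out immediately without any element-removal induction.
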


\begin{theorem}\cite[p. 15]{Frei73}\label{Fclass2} If $k>6$ and $A\subseteq \R$ with $|A|=k$ and $|A+A|\leq 3k-3$, then $A$ is contained in a $(2k-1)$-term arithmetic progression, or $A$ is a union of two arithmetic progressions of the same step size.
\end{theorem}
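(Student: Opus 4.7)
The plan is to normalize $A$, apply Theorem \ref{Fclass1} to dispatch the subcase $|A+A| \leq 3k-4$, and handle the extremal case $|A+A| = 3k-3$ by a dichotomy on the diameter. Since $|A+A|$ is invariant under affine maps (translation and scaling by a nonzero constant), and the two conclusions of the theorem are preserved by such maps, we may assume $A \subseteq \Z$ with $a_1 = 0 < a_2 < \cdots < a_k = d$ and $\gcd(a_2, \ldots, a_k) = 1$. If $|A+A| \leq 3k-4$, then Theorem \ref{Fclass1} (applied with $b \leq k-3$) places $A$ inside an AP of length at most $2k-3$, which is \emph{a fortiori} contained in a $(2k-1)$-term AP. So assume henceforth that $|A+A| = 3k-3$.

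First I would observe that $A \cup (A+d)$ is a subset of $A+A$ of size exactly $2k-1$: since $A \subseteq [0,d]$, any $x \in A \cap (A+d)$ must satisfy $x - d \in A \cap [-d, 0]$, forcing $x = d$. Consequently $(A+A) \setminus (A \cup (A+d))$ contains at most $(3k-3) - (2k-1) = k-2$ ``new'' sums. The main dichotomy is on $d$: if $d \leq 2k-2$, then $A \subseteq \{0, 1, \ldots, 2k-2\}$, a $(2k-1)$-term AP, and the first conclusion of the theorem holds.

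In the remaining case $d \geq 2k-1$, the goal is to show that $A$ decomposes as the union of two APs with a common step size. Here I would examine, for each interior $a_j$ (with $1 < j < k$), where the sums $a_j + a_i$ land: each either coincides with an element of $A \cup (A+d)$ or is one of the at most $k-2$ permitted new sums. Translating these coincidences into linear relations on the gap sequence $(a_{i+1} - a_i)_{i=1}^{k-1}$ is the engine of the proof. Through a careful case analysis in the spirit of the original arguments of \cite{Frei73}, one shows that the gaps take at most two distinct values arranged in a highly restricted pattern, equivalent to $A = P_1 \cup P_2$ with $P_1$ and $P_2$ APs of the same common difference.

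The hard part is this final case analysis for $d \geq 2k-1$: the $k-2$ new sums can distribute among the interior pairwise sums in many ways, and verifying that every compatible distribution forces either a single long AP or the two-AP template, with no exotic ``third'' possibility, demands careful bookkeeping. The hypothesis $k > 6$ enters essentially here, since small-$k$ sets admit exceptional configurations with $|A+A|$ at the $3k-3$ threshold that fit neither class, and only $k \geq 7$ guarantees enough interior indices for the rigidity argument to close out all branches.
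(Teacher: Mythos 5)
This theorem is cited by the paper from Freiman \cite[p.~15]{Frei73} and is not proven there, so there is no in-paper argument to compare against; I will assess your sketch on its own terms.

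The setup is sound as far as it goes: the decomposition $A\cup(A+d)\subseteq A+A$ of size $2k-1$, the count of at most $k-2$ ``new'' sums when $|A+A|=3k-3$, and the reduction of the subcase $|A+A|\leq 3k-4$ to Theorem \ref{Fclass1} are all correct. But there are two genuine gaps. First, the very first step --- ``we may assume $A\subseteq\Z$'' --- is not justified for $A\subseteq\R$. Translation and scaling cannot place an arbitrary finite real set inside $\Z$; that requires $A$ to lie (after translating $\min(A)$ to $0$) in a rank-one subgroup of $\R$, which is exactly what fails at the boundary $|A+A|=3k-3$. Indeed, the paper's own Theorem \ref{Fdim} with $d=1$ only forces rank one when $|A+A|<3k-3$; at equality, $A$ can genuinely be two-dimensional. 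A concrete example is $A=\{0,1,\dots,k-2,\alpha\}$ with $\alpha\notin\Q$, which satisfies $|A+A|=(2k-3)+(k-1)+1=3k-3$. For such an $A$ there is no ``diameter $d\leq 2k-2\Rightarrow A\subseteq\{0,\dots,2k-2\}$'' dichotomy, and the two-dimensional case needs its own argument showing $A$ splits into two same-step progressions. Your sketch silently discards it.

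Second, even restricting to the rank-one case, the heart of the theorem --- the case $d\geq 2k-1$, where one must show that the $\leq k-2$ extra sums force $A$ to be a union of two same-step progressions and rule out any ``exotic'' third structure --- is entirely deferred: you describe it as a ``careful case analysis in the spirit of the original arguments of \cite{Frei73}'' and name it as the hard part yourself. That case analysis is the content of the theorem, so as written the proposal is a framing of the problem plus a correct dispatch of the easy subcases, not a proof.
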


\begin{rem} In \cite[p. 23]{Frei73}, Freiman also claims a classification for $|A+A|=3k-2$. However, the proof is omitted, and as observed by Jin \cite{Jin07}, the classification is in fact incomplete. We return to this following Lemma \ref{925sum}.
\end{rem}

\noindent Theorems \ref{Fclass1} and \ref{Fclass2} can be applied on the product side to conclude that a set with very few products is either contained in a single geometric progression or is a union of two geometric progressions with the same common ratio $r$. The remainder of the proof of Theorem \ref{oldSP} consists of lower bounds on the number of sums determined by such sets, including the following fact that follows from the rational root theorem.

\begin{lemma}\cite[Lemma 2.4]{SP2023} \label{sg} If $A\subseteq \R$ is contained in a geometric progression with common ratio $r\in \Q$, $r\neq -2$, then $A$ is a Sidon set. In particular, if $|A|=k$, then $|A+A|=(k^2+k)/2$.
\end{lemma}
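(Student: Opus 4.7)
The plan is to reduce a hypothetical coincidence of pairwise sums to a rational root of an explicit integer polynomial, and then to exhaust the short list of possibilities produced by the rational root theorem. Suppose $A\subseteq\{xr^0,xr^1,\dots,xr^{n-1}\}$ for some $x\neq 0$, and assume for contradiction that $xr^i+xr^j=xr^k+xr^\ell$ for some indices with $\{i,j\}\neq\{k,\ell\}$ as multisets. If the two multisets share an index (say $i=k$), cancel $xr^i$ from both sides to get $r^{j}=r^{\ell}$; since $r\in\Q\setminus\{0,\pm 1\}$ in the cases of interest (the cases $r\in\{0,\pm 1\}$ trivially give $|A|\leq 2$), this forces $j=\ell$, contradicting disjointness. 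Otherwise the multisets are disjoint; letting $m=\min(i,j,k,\ell)$ and dividing by $xr^m$, we see that $r$ is a root of
\[
P(x)=x^{i-m}+x^{j-m}-x^{k-m}-x^{\ell-m}\in\Z[x].
\]
A brief check shows that $P$ is a nonzero integer polynomial whose constant term and leading coefficient each lie in $\{\pm 1,\pm 2\}$.

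The rational root theorem then restricts $r$ to $\{\pm 1,\pm 2,\pm 1/2\}$. The values $r=\pm 1$ are trivial, as noted. For $r=2$ the collision reads $2^a+2^b=2^c+2^d$ in nonnegative integer exponents, and uniqueness of binary representation forces $\{a,b\}=\{c,d\}$, a contradiction; the case $r=1/2$ reduces to the same identity after multiplying through by a suitable power of $2$. Finally, $r=-2$ is ruled out by hypothesis, and so is $r=-1/2$, since reversing the order of a geometric progression with ratio $-1/2$ presents it as a geometric progression with ratio $-2$. The ``In particular'' statement is then the standard observation that a Sidon set of size $k$ has exactly $\binom{k+1}{2}=(k^2+k)/2$ pairwise sums.

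The main obstacle I anticipate is not conceptual but bookkeeping: carefully verifying in each subcase (depending on which indices achieve $m$ and which are largest) that $P$ indeed has constant and leading coefficients in $\{\pm 1,\pm 2\}$, rather than further degenerating via cancellation. In the degenerate configurations, for instance when $j=m$ or $k=\ell$, the polynomial simplifies to something like $2-2x^N$ or $1-x^N$, whose only rational roots are $\pm 1$, so no new exceptions to the candidate list appear.
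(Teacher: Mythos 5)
Your proof follows the same route the paper indicates for this lemma (cited from \cite{SP2023}): reduce a hypothetical coincidence of sums to a rational root of an explicit integer polynomial, apply the rational root theorem to restrict $r$ to $\{\pm 1, \pm 2, \pm 1/2\}$, and then dispatch the surviving candidates by hand. The core mechanics are correct: disjointness of the multisets $\{i,j\}$ and $\{k,\ell\}$ guarantees that no cross-terms of $P$ cancel, so the constant and leading coefficients of $P$ are confined to $\{\pm 1, \pm 2\}$ exactly as you claim, and the binary-representation argument cleanly rules out $r=2$ (and, after clearing denominators, $r=1/2$).

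One point deserves to be made more explicit. The statement as written excludes only $r=-2$, not $r=-1/2$, yet a set such as $\{1,-2,4,-8\}$ is contained in a geometric progression with ratio $-1/2$ (read in decreasing order) and fails to be Sidon, since $1+1=(-2)+4$. So the literal hypothesis ``$r\neq -2$'' does not suffice on its own; the intended reading, which your reversal remark is gesturing at, is that ``common ratio'' is only well-defined up to $r \leftrightarrow 1/r$, and the exclusion is really of the pair $\{-2,-1/2\}$. It would be cleaner to say so directly rather than frame it as ``ruling out'' $r=-1/2$, since a reader could reasonably object that $-1/2 \neq -2$ and so that case is not excluded by the stated hypothesis. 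In the context of this paper the issue is moot, since the lemma is applied only to sets of positive reals where $r>1$, but the proof should make the intended interpretation explicit.
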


As remarked in \cite[Section 4]{SP2023}, Theorems \ref{Fclass1} and \ref{Fclass2} sufficiently pave the way for exact values of $SP(k)$ for $k\leq 9$, because hypothetical improvements on observed examples fall under the classification umbrella. In contrast, for $k=10$, the best observed example was $A=\{1,2,3,4,6,8,9,12,16,18\}$, with $|A+A|=30$ and $|AA|=29$, leading to the conjecture that $SP(10)=30$. To prove this, one must rule out the existence of a set of $10$ positive integers with at most $29$ sums and at most $29$ products, but since $29=3(10)-1$, Theorems \ref{Fclass1} and \ref{Fclass2} are not directly applicable, and new techniques are required. Here we achieve this goal and extend it one step further.

\begin{theorem} \label{mainNew} We have the exact values $SP(10)=30$ and $SP(11)=34$.  Further, the sets achieving $\max\{|A+A|,|AA|\}=SP(k)$ for $k=10,11$, provided in Table \ref{UBtable}, are unique up to scaling.
\end{theorem}

\noindent Since $SP(k)$ is defined as a minimum, we can establish the claimed upper bounds via one example for each $k$, as listed below. We encourage the reader to verify the values of $|A+A|$ and $|AA|$. The remainder of the paper is dedicated to establishing the corresponding lower bounds.

\begin{table}[H] 

\centering

\caption{Examples showing upper bounds for $SP(k)$ for $4\leq k \leq 11$.}
		\label {UBtable}


\begin{tabular}{||c||c||c||c||}

\hline

$k$ & $A$ & $|A+A|$ & $|AA|$\\

\hline\hline

4 & \{1, 2, 3, 4\} & 7 & 9 \\

\hline

5 & \{1, 2, 3, 4, 6\} & 10 & 12 \\

\hline

6 & \{1, 2, 3, 4, 6, 8\} & 13 & 15 \\

\hline

7 & \{1, 2, 3, 4, 6, 8, 12\} & 18 & 18 \\

\hline

8 & \{1, 2, 3, 4, 6, 8, 9, 12\} & 20 & 22 \\

\hline 

9 & \{1, 2, 3, 4, 6, 8, 9, 12, 16\} & 25 & 25 \\

\hline

10 & \{1, 2, 3, 4, 6, 8, 9, 12, 16, 18\} & 30 & 29 \\

\hline

11 & \{1, 2, 3, 4, 6, 8, 9, 12, 16, 18, 24\} & 34 & 32 \\

\hline

\end{tabular}
\end{table}

\section{Proof outline} \label{outline}
Suppose $A\subseteq \N$ with $|A|=10$ (resp. $11$) and $|AA|\leq 29$ (resp. $33$).  We show that $|A+A|\geq 31$ (resp. $35$) unless $A$ is, up to scaling, $\{1, 2, 3, 4, 6, 8, 9, 12, 16, 18\}$ (resp. $\{1, 2, 3, 4, 6, 8, 9, 12, 16, 18, 24\}$), as follows.

\begin{enumerate} \item Scale $A$ so that $A\subseteq \Q$ with $\min(A)=1$. Let $r$ be the least element of $A$ besides $1$, and let $L=\{\log_r(a): a\in A\}$, so the two least elements of $L$ are $0$ and $1$.

\

\item If at least $8$ elements of $L$ are contained in a single arithmetic progression, then at least $8$ elements of $A$ are contained in a single geometric progression, hence $|A+A|\geq (8^2+8)/2 =36$ by Lemma \ref{sg}.

\

\item In Section \ref{spsc}, aided by Theorem \ref{Fclass2} and Python computation, we establish that if $L$ does not contain at least $8$ elements of a single arithmetic progression, then $L=\{m+n\alpha: (m,n)\in P\}$, where $\alpha \notin \Q$ and $P\subseteq \Z^2$ belongs to a short list, up to translation and invertible linear transformation, and hence $A$ has one of a short list of two-dimensional geometric progression structures. In particular, up to scaling, at least $9$ elements of $A$ are contained in one of $G_1=\{r^ms^n: 0\leq m \leq 7, n\in \{0,1\}\}$ or $G_2=\{r^ms^n: 0\leq m \leq 3, 0\leq n \leq 2\}$,  with $r,s\in \Q$, $r,s>1$, and $\log_r(s)\notin \Q$.

\

\item In Section \ref{ccs}, using Python computation to search for solutions to the appropriate systems of bivariate polynomial equations, we control the number of nontrivial repeated sums, which we refer to as \textit{collisions} (also called \textit{additive quadruples}, which include trivial repeats), in subsets of two-dimensional geometric progressions like $G_1$ or $G_2$. In particular,  we show that a $9$-element subset of $G_1$ or $G_2$ determines at least $39$ distinct sums, unless $(r,s)$ lies in a list of $155$ exceptional pairs for $G_1$ or $75$ exceptional pairs for $G_2$.

\

\item Separately checking all exceptional $(r,s)$ pairs from Section \ref{ccs} for all two-dimensional geometric progression structures enumerated in Section \ref{spsc}, we find that all determine at least $31$ (resp. $35$) sums, except scalings of $\{1, 2, 3, 4, 6, 8, 9, 12, 16, 18\}$ and  $\{1, 2, 3, 4, 6, 8, 9, 12, 16, 18, 24\}$, respectively.

\

\item In Section \ref{usec}, to establish uniqueness (up to scaling), we outline a similar approach to items (1)-(5), but under the weaker assumption $|AA|\leq 30$ (resp. $|AA|\leq 34$), with less detailed classification and more ad hoc computation.
 
\end{enumerate}

\noindent Throughout the paper, we provide pseudocode desriptions of algorithms, and summaries of computational output. For those interested, all Python code and collected output is available at \url{https://github.com/Alex-Rice-Millsaps/Sum-Product-Summer-2025} in the form of annotated Jupyter notebooks.

\section{Small product set classification} \label{spsc}

To motivate our first computation, suppose $L\subseteq \R$ with $|L|=10$ and $|L+L|\leq 29$, suppose the two least elements of $L$ are $0$ and $1$, a property we refer to as \textit{reduced}, and let $L'=L\setminus\{0\}$. If $L$ is not contained in $\Z$, then $|L+L|\geq |L'+L'|+3$, because the addition of $0$ creates at least three news sums: $0$, $1$, and the least noninteger element of $L'$. Further, if $|L+L|\geq |L'+L'|+4$, then $|L'|=9$ and $|L'+L'|\leq 25$, which brings us one step closer to the applicability of Theorem \ref{Fclass2}. For these reasons, we focus on the case where $L$ is not contained in a single arithmetic progression, and $|L+L|=|L'+L'|+3$. In particular, letting $\alpha$ be the least noninteger element of $L$, we have that that all elements of $L$ take the form $m+n\alpha$ for integers $m,n\geq 0$. Otherwise, if $\beta$ is the least element of $L$ to not take this form, then $0,1,\alpha,\beta$ are four new sums created by adding $0$ to $L'$. Further, $\alpha$ must be irrational, as if $\alpha=p/q$, then $L$ is contained in an arithmetic progression of step size $1/q$. Noting the map $m+n\alpha \to (m,n)$ is a group isomorphism into $\Z^2$, we focus on characterizing the structure of $P\subseteq \Z^2$ defined by $A=\{m+n\alpha: (m,n)\in P\}$. Specifically, we classify $P$ up to translation and invertible  linear transformation, which we collectively refer to as \textit{similarity}.    

Here we describe an algorithm WinnersSearch, which takes as input $k,M\in \N$ and outputs a list of all $P\subseteq \Z_{\geq 0}^2$ satisfying $|P|=k$, $(0,0),(1,0),(0,1)\in P$, $|P+P|\leq M$, and $|P+P|=|P'+P'|+3$, where $P'=P\setminus\{(0,0)\}$, up to similarity. We build each set $P$ by appending elements $(m,n)\in \Z_{\geq 0}^2$ in nondecreasing order of $m+n$ (i.e. $\ell_1$-norm), which we refer to as \textit{score}. Crucially the condition $|P+P|=|P'+P'|+3$ requires that every newly appended element $(m,n)$ must be equal to a sum of two previously appended elements, otherwise $(0,0),(1,0),(0,1),(m,n)$ are four elements of $(P+P)\setminus(P'+P')$. Therefore, we can keep track of the score of the newest elements, and the pairwise sums of previous elements grouped by score, to get a short list of `next element' candidates to loop through and recursively continue the search. A search branch terminates when the number of pairwise sums exceeds $M$, and the branch is discarded, or when the set reaches $k$ elements with at most $M$ distinct sums, and $P$ is added to the list of outputs.

\begin{algorithm}[H]
\textbf{WinnersSearch} 
\begin{algorithmic}
\State{\textbf{Input:} $k,M\in \N$}

\State Winners $\gets \{\}$
\Function{Reflect}{$A$}
	\State \Return $\{(y,x): (x,y)\in A\}$
\EndFunction
\Function{Search}{$A$, ScoreDict, ScoreDictKeys, SumSet, Used, $t$, $k$, $M$}
    \For{$j\in $ ScoreDictKeys with $j\geq t$}
        \For{$x\in$ ScoreDict$(j)$}
            \If{$x \notin A$}
                \State $A \gets A\cup\{x\}$
                \If{$A \notin$ Used and \Call{Reflect}{$A$} $\notin$ Used}
                    \State Used $\gets$ Used $\cup \ \{A\}$
                    \For{$y\in A$}
						\State $z = (z_0,z_1)\gets x+y$
						\If{$z\notin$ SumSet}
							\State SumSet$\gets$ SumSet $\cup \ \{z\}$
							\State $q \gets z_0+z_1$
							\If{$q \notin$ ScoreDictKeys}
								\State ScoreDictKeys $\gets$ ScoreDictKeys$\ \cup \ \{q\}$
								\State ScoreDict$(q)\gets \{z\}$
							\Else
								\State ScoreDict$(q) \gets $ScoreDict$(q)\cup\{z\}$
							\EndIf
						\EndIf 
                    \EndFor
                    \If{$|A|=k$}
						\If{$|$SumSet$|\leq M$ and $A\notin$ Winners and \Call{Reflect}{$A$} $\notin$ Winners}
						\State Winners $\gets$ Winners $\cup \ \{A\}$
					
						\EndIf
                  
					\Else
						\If{$|$SumSet$|\leq M$}
							\State \Call{Search}{$A$, ScoreDict, ScoreDictKeys, SumSet, Used, $j$, $M$}
						\EndIf
                  
					\EndIf
                \EndIf
			\EndIf
        \EndFor
    \EndFor
\EndFunction
\State $A \gets \{(0,0), (0,1), (1,0)\}$ 
\State ScoreDict $\gets$ empty dictionary
\State ScoreDict$(0) \gets \{(0,0)\}$
\State ScoreDict$(1) \gets \{(0,1),(1,0)\}$
\State ScoreDict$(2) \gets \{(0,2),(1,1),(2,0)\}$
\State ScoreDictKeys$\gets \{0,1,2\}$
\State SumSet $\gets \{(0,0), (0,1), (1,0), (0,2),(1,1),(2,0)\}$
\State Used $\gets \{A\}$
\State \Call{Search}{$A$, ScoreDict, ScoreDictKeys, SumSet, Used, $2$, $k$, $M$}
\State \Return Winners

\end{algorithmic}
\end{algorithm}
\newpage
Analysis of WinnersSearch$(10,29)$ (runtime $25$ seconds) yields the following classification.

\begin{lemma}\label{1029sum} If $L\subseteq \R$ is reduced with $|L|=10$ and $|L+L|\leq 29$, then at least one of the following holds:
\begin{enumerate}[(1)] \item $L$ is contained in an arithmetic progression,
\item $|L+L|\geq |L'+L'| + 4$, where $L'=L\setminus\{0\}$,
\item $L=\{m+n\alpha: (m,n)\in P\},$ where $\alpha\notin \Q$ and $P$ takes one of the following forms up to similarity:
\begin{enumerate}[(i)] \item $\{(0,0),(1,0),\dots,(k_1-1,0),(0,1),(1,1)\dots,(k_2-1,1)\}, \  k_1\geq k_2, \ k_1+k_2=10,$

\item $\{(0,0),\dots,(k_1-1,0),(0,1),\dots, (k_2-2,1), (k_2,1)\}, \ k_1+k_2=10, $

\item  $\{(0,0),\dots,(k_1-1,0),(0,1),\dots,(k_2-1,1), (0,2),\dots,(k_3-1,2)\}$, \\$(k_1,k_2,k_3)\in \{(6,3,1),(5,4,1),(5,3,2),(4,4,2),(4,3,3),(3,4,3)\},$
\item $\{(0,0),(1,0),(2,0),(3,0),(0,1),(1,1),(2,1),(0,2),(1,2),(0,3)\}$.
\item $\{(0,0),(1,0),\dots, (k_1-1,0),(0,1),\dots,(k_2-1,1),(1,2),\dots,(k_3,2)\}$,  \\$(k_1,k_2,k_3)\in \{(5,4,1),(4,4,2),(3,4,3)\},$
\item $\{(0,0),(1,0),\dots,(6,0),(0,1),(2,1),(4,1)\}$.
\end{enumerate}

\end{enumerate}
\end{lemma}

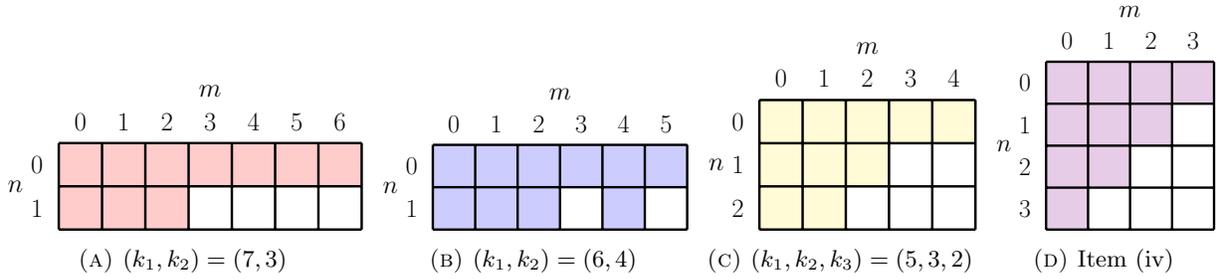
\begin{figure}[H]
\centering

\begin{subfigure}[t]{.29\textwidth}
\centering
\resizebox{\linewidth}{!}{%
\begin{tikzpicture}
	\fill[red!20] (0,2) rectangle (7,1);
    \fill[red!20] (0,1) rectangle (3,0);
	
    \draw[step=1cm, black, ultra thick] (0,2) grid (7,0)
    (0.5,2.5) node {\huge $0$}
    (1.5,2.5) node {\huge $1$}
    (2.5,2.5) node {\huge $2$}
    (3.5,2.5) node {\huge $3$}
    (4.5,2.5) node {\huge $4$}
    (5.5,2.5) node {\huge $5$}
    (6.5,2.5) node {\huge $6$}
    (-0.5,1.5) node {\huge $0$}
    (-0.5,0.5) node {\huge $1$};

    \node at (3.5,3.2) {\huge $m$};
    \node at (-1,1) {\huge $n$};

\end{tikzpicture}}
\caption{$(k_1,k_2)=(7,3)$}
\end{subfigure}\hspace{2mm}%
\begin{subfigure}[t]{.25\textwidth}
\centering
\resizebox{\linewidth}{!}{%
\begin{tikzpicture}
	\fill[blue!20] (0,2) rectangle (6,1);
    \fill[blue!20] (0,1) rectangle (3,0);
    \fill[blue!20] (4,1) rectangle (5,0);
    \draw[step=1cm, black, ultra thick] (0,2) grid (6,0)
    (0.5,2.5) node {\huge $0$}
    (1.5,2.5) node {\huge $1$}
    (2.5,2.5) node {\huge $2$}
    (3.5,2.5) node {\huge $3$}
    (4.5,2.5) node {\huge $4$}
    (5.5,2.5) node {\huge $5$}
    (-0.5,1.5) node {\huge $0$}
    (-0.5,0.5) node {\huge $1$};

    \node at (3,3.2) {\huge $m$};
    \node at (-1,1) {\huge $n$};

\end{tikzpicture}}
\caption{$(k_1,k_2)=(6,4)$}
\end{subfigure}\hspace{2mm}%
\begin{subfigure}[t]{.22\textwidth}
\centering
\resizebox{\linewidth}{!}{%
\begin{tikzpicture}
	\fill[yellow!20] (0,2) rectangle (5,1);
    \fill[yellow!20] (0,1) rectangle (3,0);
	\fill[yellow!20] (0,0) rectangle (2,-1);
    \draw[step=1cm, black, ultra thick] (0,2) grid (5,-1)
    (0.5,2.5) node {\huge $0$}
    (1.5,2.5) node {\huge $1$}
    (2.5,2.5) node {\huge $2$}
    (3.5,2.5) node {\huge $3$}
    (4.5,2.5) node {\huge $4$}
    (-0.5,1.5) node {\huge $0$}
    (-0.5,0.5) node {\huge $1$}
    (-0.5,-0.5) node {\huge $2$};

    \node at (2.5,3.2) {\huge $m$};
    \node at (-1,0.5) {\huge $n$};

\end{tikzpicture}}
\caption{$(k_1,k_2,k_3)=(5,3,2)$}
\end{subfigure}\hspace{2mm}%
\begin{subfigure}[t]{.18\textwidth}
\centering
\resizebox{\linewidth}{!}{%
\begin{tikzpicture}
	\fill[violet!20] (0,4) rectangle (4,3);
    \fill[violet!20] (0,3) rectangle (3,2);
    \fill[violet!20] (0,2) rectangle (2,1);
    \fill[violet!20] (0,1) rectangle (1,0);
    \draw[step=1cm, black, ultra thick] (0,4) grid (4,0)
    (0.5,4.5) node {\huge $0$}
    (1.5,4.5) node {\huge $1$}
    (2.5,4.5) node {\huge $2$}
    (3.5,4.5) node {\huge $3$}
    (-0.5,3.5) node {\huge $0$}
    (-0.5,2.5) node {\huge $1$}
    (-0.5,1.5) node {\huge $2$}
    (-0.5,.5) node {\huge $3$};

    \node at (2,5.2) {\huge $m$};
    \node at (-1,2) {\huge $n$};

\end{tikzpicture}}
\caption{Item (iv)}
\end{subfigure}

\caption{One example from items (i)-(iii), and unique structure from (iv), in Lemma \ref{1029sum}.}
\label{fig1}
\end{figure}

\begin{figure}[H]
\centering
\hfill
\begin{subfigure}[t]{0.22\textwidth}
\centering
\resizebox{\linewidth}{!}{%
\begin{tikzpicture}
	\fill[red!20] (0,2) rectangle (3,1);
    \fill[red!20] (0,1) rectangle (4,0);
	\fill[red!20] (1,0) rectangle (4,-1);
    \draw[step=1cm, black, ultra thick] (0,2) grid (4,-1)
    (0.5,2.5) node {\huge $0$} 
    (1.5,2.5) node {\huge $1$} 
    (2.5,2.5) node {\huge $2$} 
    (3.5,2.5) node {\huge $3$} 
    (-0.5,1.5) node {\huge $0$} 
    (-0.5,0.5) node {\huge $1$} 
    (-0.5,-0.5) node {\huge $2$};

    \node at (2,3.2) {\huge $m$};
    \node at (-1,0.5) {\huge $n$};

\end{tikzpicture}}
\caption{$(k_1,k_2,k_3)=(3,4,3)$}
\end{subfigure}\hfill
\begin{subfigure}[t]{0.32\textwidth}
\centering
\resizebox{\linewidth}{!}{%
\begin{tikzpicture}
	\fill[blue!20] (0,2) rectangle (7,1);
    \fill[blue!20] (0,1) rectangle (1,0);
    \fill[blue!20] (2,1) rectangle (3,0);
    \fill[blue!20] (4,1) rectangle (5,0);
    \draw[step=1cm, black, ultra thick] (0,2) grid (7,0)
    (0.5,2.5) node {\huge $0$} 
    (1.5,2.5) node {\huge $1$} 
    (2.5,2.5) node {\huge $2$} 
    (3.5,2.5) node {\huge $3$} 
    (4.5,2.5) node {\huge $4$}
    (5.5,2.5) node {\huge $5$}
    (6.5,2.5) node {\huge $6$}
    (-0.5,1.5) node {\huge $0$} 
    (-0.5,0.5) node {\huge $1$};

    \node at (3.5,3.2) {\huge $m$};
    \node at (-1,1) {\huge $n$};

\end{tikzpicture}}
\caption{Item (vi)}
\end{subfigure}\hfill \
\caption{Example from item (v), and unique structure from (vi), in Lemma \ref{1029sum}.}
\label{fig2}
\end{figure}
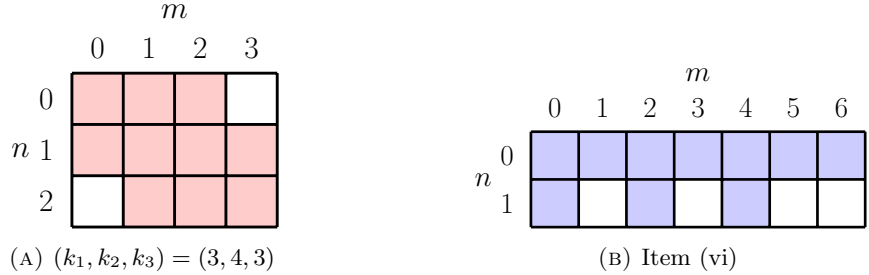

\noindent To account for item (2) in Lemma \ref{1029sum}, we also analyze WinnersSearch$(9,25)$ (runtime $6$ seconds).

\begin{lemma}\label{925sum} If $L\subseteq \R$ is reduced with $|L|=9$ and $|L+L|\leq 25$, then at least one of the following holds:
\begin{enumerate}[(1)] \item $L$ is contained in an arithmetic progression,
\item $|L+L|\geq |L'+L'| + 4$, where $L'=L\setminus\{0\}$,
\item $L=\{m+n\alpha: (m,n)\in P\},$ where $\alpha\notin \Q$ and $P$ takes one of the following forms up to similarity:
\begin{enumerate}[(i)] \item $\{(0,0),(1,0),\dots,(k_1-1,0),(0,1),(1,1)\dots,(k_2-1,1)\}, \  k_1\geq k_2, \ k_1+k_2=9,$
\item $\{(0,0),(1,0),\dots,(6,0),(8,0),(0,1)\}$,
\item $\{(0,0),(1,0),\dots,(6,0),(0,1),(2,1)\}$,

\item  $\{(0,0),\dots,(k_1-1,0),(0,1),\dots,(k_2-1,1), (0,2),\dots,(k_3-1,2)\}, \\ (k_1,k_2,k_3)\in \{(5,3,1),(4,3,2),(3,3,3)\}$.
\end{enumerate}
\end{enumerate}
\end{lemma}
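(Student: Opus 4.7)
The plan is to mirror the proof of Lemma \ref{1029sum}. Assume we are in neither case (1) nor case (2), so $L$ is not contained in an arithmetic progression and $|L+L|\leq |L'+L'|+3$ with $L'=L\setminus\{0\}$. If $L\subseteq\Z$, then $L$ lies in the AP $\{\min L,\min L+1,\dots,\max L\}$ of step $1$, returning case (1); so let $\alpha$ denote the least noninteger element of $L$. Since $0,1,\alpha\in L+L$ but none of these lie in $L'+L'$, we have $|L+L|\geq |L'+L'|+3$, forcing equality. If some $\beta\in L$ failed to take the form $m+n\alpha$ with $m,n\in\Z_{\geq 0}$, then $\beta$ would be a fourth new sum, contradicting the equality; so every $\beta\in L$ does. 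Moreover $\alpha\notin\Q$, else $L$ would lie in an arithmetic progression of step $1/q$ (where $\alpha=p/q$).

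Via the isomorphism $m+n\alpha\leftrightarrow(m,n)$, the problem reduces to classifying $P\subseteq\Z_{\geq 0}^2$ with $|P|=9$, $(0,0),(1,0),(0,1)\in P$, $|P+P|\leq 25$, and $|P+P|=|P'+P'|+3$ (where $P'=P\setminus\{(0,0)\}$), up to similarity---translation together with the $GL_2(\Z)$ action. I would then invoke WinnersSearch$(9,25)$ to enumerate all such $P$ up to the algorithm's built-in reflection $(x,y)\mapsto(y,x)$. Every similarity class is captured because any non-collinear $P$ can be translated so that its lowest-score point sits at the origin, then unimodularly transformed so that two further low-score points occupy $(1,0)$ and $(0,1)$; a collinear $P$ lies in a single AP and recovers case (1). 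A post-processing pass collapses the output under the full similarity group by computing a canonical representative of each class via minimization over a sufficiently large box of unimodular matrices.

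The remaining task, matching each canonical representative against the templates (i)--(iv), is where I expect the main obstacle to lie. In particular, items (ii) and (iii) are exceptional configurations built from a $7$-term sub-AP along one row with a single appended point, which do not conform to the parametric families in items (i) and (iv) and must be recognized as isolated outputs of the search rather than instances of a family. As a sanity check, I would verify that each listed template indeed satisfies $|P+P|\leq 25$ and $|P+P|=|P'+P'|+3$ and is therefore a legitimate output of WinnersSearch, and confirm that the three-row tuples $(k_1,k_2,k_3)\in\{(5,3,1),(4,3,2),(3,3,3)\}$ exhaust those with $k_1+k_2+k_3=9$ meeting the sum-budget constraint. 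Any stray class in the WinnersSearch output not matching (i)--(iv) would either indicate a missing template or a bug in the canonicalization, and cross-checking a handful of small cases by hand should catch either.
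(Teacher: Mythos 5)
Your proposal follows the paper's route: assuming neither (1) nor (2) holds, deduce $L=\{m+n\alpha:(m,n)\in P\}$ with $\alpha\notin\Q$ and $P\subseteq\Z_{\geq0}^2$, then enumerate admissible $P$ via WinnersSearch$(9,25)$ and read off the similarity classes; the paper gives this reduction once (before Lemma \ref{1029sum}) and obtains Lemma \ref{925sum} by the identical computation. One step in your completeness justification is, however, wrong as stated: it is not true that every non-collinear $P\subseteq\Z^2$ can be translated and unimodularly transformed so that three of its low-score points land on $(0,0),(1,0),(0,1)$ --- the differences in $P$ may generate a proper sublattice of $\Z^2$ (e.g.\ $\{(0,0),(2,0),(0,2)\}$), which no element of $GL_2(\Z)$ can rescale. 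Fortunately that argument is unnecessary here: by construction $0,1,\alpha\in L$ with $\alpha$ the least noninteger element, so $(0,0),(1,0),(0,1)\in P$ \emph{automatically} in the coordinates you set up, and the score-ordered search with ``each new point is a sum of two earlier ones'' is exactly the condition $|P+P|=|P'+P'|+3$ (any summand has strictly smaller score than the sum). With that repair your derivation is sound, and, as in the paper, the final matching of WinnersSearch output against templates (i)--(iv) is a computational/inspection step rather than a hand proof.
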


\begin{rem} Items (iii) and (iv) in Lemma \ref{925sum} are missing from Freiman's $3k-2$ characterization in \cite[p. 23]{Frei73}. Item (iii) generalizes to $\{(0,0),(1,0),\dots,(k-3,0),(0,1),(2,1)\}$, as noted by Jin \cite[Introduction]{Jin07}. \end{rem}

\noindent Note that for sets $L$ in item (2) in Lemma \ref{925sum}, $L'$ satisfies $|L'|=8$ and $|L'+L'| \leq 21 = 3(8)-3$, hence, by Theorem \ref{Fclass2}, $L'$ is either contained in a single arithmetic progression, or is a union of two arithmetic progressions of the same step size. To complete our classification, we separately analyze the ways in which one element can be added to a structure enumerated in Lemma \ref{925sum}, or two elements can be added to a union of two arithmetic progressions of the same step size, while maintaining at most $30$ distinct sums. Combining these with the structures enumerated in Lemma \ref{1029sum} yields the following.

\begin{corollary} \label{1029sumcor} If $L\subseteq \R$ is reduced with $|L|=10$ and $|L+L|\leq 29$, then at least one of the following holds:
\begin{enumerate}[(1)] \item At least $8$ elements of $L$ are contained in a single arithmetic progression,
\item $L=\{m+n\alpha: (m,n)\in P\},$ where $\alpha\notin \Q$ and $P$ takes one of the forms listed in item (3) of Lemma \ref{1029sum}.



\end{enumerate}
\end{corollary}

\noindent We observe that, for all structures enumerated in item (3) of Lemma \ref{1029sum}, $P$ contains at least $9$ elements of $\{(m,n): 0\leq m \leq 7, n\in \{0,1\}\}$ or at least $9$ elements of $\{(m,n): 0\leq m \leq 3, 0\leq n \leq 2\}$. While we do not provide a full classification up to similarity for $L\subseteq \R$ with $|L|=11$ and $|L+L|\leq 33$, we do note that all such similarity classes are either output by WinnersSearch$(11,33)$, or obtained by adding an additional element to a structure described in Corollary \ref{1029sumcor}. Analysis of WinnersSearch$(11,33)$ (runtime $3$ minutes) confirms the following result.

\begin{corollary} \label{sumcornew} If $L\subseteq \R$ with $|L|=10$ and $|L+L|\leq 29$, or $|L|=11$ and $|L+L|\leq 33$, then at least one of the following holds:
\begin{enumerate}[(1)] \item At least $8$ elements of $L$ are contained in a single arithmetic progression,
\item $L=\{m+n\alpha: (m,n)\in P\},$ where $\alpha\notin \Q$ and $P$, up to similarity, contains at least $9$ elements of $\{(m,n): 0\leq m \leq 7, n\in \{0,1\}\}$,
\item $L=\{m+n\alpha: (m,n)\in P\},$ where $\alpha\notin \Q$ and $P$, up to similarity, contains at least $9$ elements of $\{(m,n): 0\leq m \leq 3, 0\leq n \leq 2\}$.
\end{enumerate}
\end{corollary}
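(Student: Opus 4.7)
The plan is to dispatch the two cases $|L|=10, |L+L|\leq 29$ and $|L|=11, |L+L|\leq 33$ separately, in each instance reducing to a finite inspection of explicitly listed structures.

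For $|L|=10$, scale and translate so that $L$ is reduced, and apply Corollary \ref{1029sumcor}. If its conclusion (1) holds, we already have $8$ elements of $L$ in an arithmetic progression, matching item (1) here. Otherwise $L=\{m+n\alpha:(m,n)\in P\}$ with $\alpha\notin\Q$ and $P$ in one of the finitely many similarity classes from item (2) of Corollary \ref{1029sumcor}. For each such $P$, I inspect it directly and verify that after a suitable $\Z$-linear change of basis and translation, either some $8$ points are collinear with unit integer spacing (yielding an $8$-term AP in $L$), or at least $9$ points lie in $\{(m,n):0\leq m\leq 7,\ n\in\{0,1\}\}$, or at least $9$ points lie in $\{(m,n):0\leq m\leq 3,\ 0\leq n\leq 2\}$. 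In practice, two-row structures with a long row (for example Lemma \ref{1029sum}(i) with $k_1\geq 8$, or Corollary \ref{1029sumcor}(iii)) either supply the $8$-term AP directly or fit wholesale into the $8\times 2$ rectangle after dropping at most one outlier; three-row structures with balanced rows, such as $(k_1,k_2,k_3)=(4,4,2),(4,3,3),(3,4,3)$ and Lemma \ref{1029sum}(iv), fit into the $4\times 3$ rectangle after dropping at most one vertex; and mixed cases such as $(5,3,2),(5,4,1),(6,3,1)$ reduce to the appropriate rectangle by discarding the sparse third row.

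For $|L|=11$, again reduce $L$ and set $L'=L\setminus\{0\}$, then split on whether $|L+L|=|L'+L'|+3$ or $|L+L|\geq|L'+L'|+4$. In the first case, by the same dichotomy that motivates WinnersSearch in Section \ref{spsc}, $L$ is one of the similarity classes output by WinnersSearch$(11,33)$, and I inspect this list directly to verify for each class that either $8$ elements lie in a common AP or $9$ elements admit, after a $\Z$-linear change of basis, a placement into one of the two target rectangles. In the second case, $|L'|=10$ and $|L'+L'|\leq 29$, so after re-reducing $L'$ the preceding paragraph applies to $L'$; since $L=L'\cup\{0\}$ and $0$ lies in every lattice of the form $\Z+\alpha\Z$, the $8$-term AP or $9$-element rectangle subset guaranteed for $L'$ persists in $L$.

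The main obstacle is inspecting the WinnersSearch$(11,33)$ output, which is larger than the list appearing in Lemma \ref{1029sum} and may include structures whose rectangle embedding requires a non-obvious $\Z$-linear change of basis. Both required tests are algorithmic, however: one can search for an $8$-term AP by scanning candidate common differences between pairs of points and counting lattice-collinear companions, and one can search for a rectangle embedding by enumerating $9$-element subsets, computing the integer lattice generated by their pairwise differences, and checking whether a unimodular change of basis places the subset inside $[0,7]\times\{0,1\}$ or $[0,3]\times[0,2]$. Both checks are natural extensions of the companion Python code.
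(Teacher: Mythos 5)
Your approach mirrors the paper's: for $|L|=10$ you invoke Corollary~\ref{1029sumcor} and inspect its finite list of similarity classes, and for $|L|=11$ you split on whether $|L+L|=|L'+L'|+3$ (falling to WinnersSearch$(11,33)$) or $|L+L|\geq|L'+L'|+4$ (falling to the ten-element classification for $L'$), which is exactly the dichotomy the paper sketches when it says the eleven-element similarity classes ``are either output by WinnersSearch$(11,33)$, or obtained by adding an additional element to a structure described in Corollary~\ref{1029sumcor}.''

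One step is stated imprecisely in the $\geq +4$ sub-case. You write that ``$0$ lies in every lattice of the form $\Z+\alpha\Z$, so the $\dots$ $9$-element rectangle subset guaranteed for $L'$ persists in $L$.'' That reasoning does not apply as stated: Corollary~\ref{1029sumcor} classifies $L'$ only after $L'$ has been \emph{re-reduced}, i.e.\ after the affine map sending $\min(L')=1$ to $0$ and the next element to $1$. In those new coordinates the adjoined point $0\in L$ lands at $-1/(\beta-1)$ (where $\beta$ is the third-smallest element of $L$), which is not automatically of the form $m+n\alpha$. To conclude that \emph{all} of $L$, and not just $L'$, is a two-dimensional generalized progression, you still need an argument. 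The doubling constraint supplies it: if the adjoined point were outside the lattice for $L'$, then all ten of its sums with $L'$ plus its double would be new, forcing $|L'+L'|\leq 22$ and hence, by Theorem~\ref{Fclass1}, placing $L'$ in a $13$-term AP, which already yields item~(1). (One could also appeal directly to Theorem~\ref{Fdim} with $d=2$.) With that repair, your proof tracks the paper's own terse argument accurately and defers to the same computations; note also that the paper performs the analogous ``add one/two elements to a smaller structure'' analysis explicitly when deriving Corollary~\ref{1029sumcor} from Lemma~\ref{925sum}, so the same care is needed at $k=11$.
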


\begin{figure}[H]
\centering
\begin{tikzpicture}[scale=0.7, every node/.style={transform shape}]
    \draw[step=1cm, black, ultra thick] (0,2) grid (8,-1)
    (0.5,2.5) node [ultra thick, black] {\huge $0$} 
    (1.5,2.5) node [ultra thick, black] {\huge $1$} 
    (2.5,2.5) node [ultra thick, black] {\huge $2$} 
    (3.5,2.5) node [ultra thick, black] {\huge $3$} 
    (4.5,2.5) node [ultra thick, black] {\huge $4$}
    (5.5,2.5) node [ultra thick, black] {\huge $5$}
    (6.5,2.5) node [ultra thick, black] {\huge $6$}
    (7.5,2.5) node [ultra thick, black] {\huge $7$}
    (-0.5, 1.5) node [ultra thick, black] {\huge $0$} 
    (-0.5, 0.5) node [ultra thick, black] {\huge $1$}
	(-0.5, -0.5) node [ultra thick, black] {\huge $2$} 
    ;

    \node at (4,3.2) {\huge $m$};
    \node at (-1,0.5) {\huge $n$};
    \draw[dashed, red, ultra thick] (-.125,2.125) rectangle (8.125,-0.125);
	\draw[dashed, blue, ultra thick] (-.125,2.125) rectangle (4.125,-1.125);
\end{tikzpicture}
\caption{The red and blue boxes indicate the configurations from items (2) and (3), respectively, in Corollary \ref{sumcornew}.}
\label{fig3}
\end{figure}
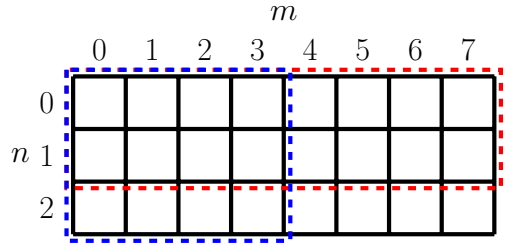

\noindent Applying Corollary \ref{sumcornew} on the product side yields the final output of this section.

\begin{corollary} \label{1029prodcor2} If $A\subseteq (0,\infty)$ with $|A|=10$ and $|AA|\leq 29$, or $|A|=11$ and $|AA|\leq 33$, then at least one of the following holds:
\begin{enumerate}[(1)] \item At least $8$ elements of $A$ are contained in a single geometric progression,
\item Up to scaling, at least $9$ elements of $A$ are contained in $\{r^ms^n: 0\leq m \leq 7, n\in \{0,1\}\}$ with $r,s>1$, $\log_r(s)\notin \Q$.
\item Up to scaling, at least $9$ elements of $A$ are contained in  $\{r^ms^n: 0\leq m \leq 3, 0\leq n \leq 2\}$ with $r,s>1$, $\log_r(s)\notin \Q$.
\end{enumerate}
\end{corollary}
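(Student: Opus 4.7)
The plan is to transfer the sum-side classification of Corollary \ref{sumcornew} to the product side via the logarithm. Set $L = \{\log a : a \in A\}$; since $\log$ is a bijection on $(0,\infty)$ and $AA = \exp(L+L)$, we have $|L| = |A|$ and $|L+L| = |AA|$, so Corollary \ref{sumcornew} applies to $L$ and returns one of its three structural conclusions.

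If case (1) holds, then 8 elements of $L$ lie in a single arithmetic progression $\{x + id\}$, which exponentiates to 8 elements of $A$ in the geometric progression $\{e^x (e^d)^i\}$, yielding conclusion (1). In cases (2)--(3), $L = \{m + n\alpha : (m,n) \in P\}$ with $\alpha \notin \Q$, and I would unpack the ``up to similarity'' clause to obtain an invertible affine map $\phi(p,q) = (ap+bq+e,\, cp+dq+f)$ with $ad-bc = \pm 1$ sending at least 9 points of one of the boxes $B_1 = \{(m,n) : 0 \leq m \leq 7,\ n \in \{0,1\}\}$ or $B_2 = \{(m,n) : 0 \leq m \leq 3,\ 0 \leq n \leq 2\}$ into $P$. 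Setting $\beta = a + c\alpha$, $\gamma = b + d\alpha$, $\delta = e + f\alpha$, direct substitution shows $L$ contains the 9 values $\delta + p\beta + q\gamma$ for the chosen $(p,q) \in B_i$. Exponentiating and scaling $A$ by $e^{-\delta}$ then places at least 9 elements of $A$ (up to scaling) into $\{r^p s^q : (p,q) \in B_i\}$ with $r = e^\beta$, $s = e^\gamma$.

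The last step is to secure $r,s > 1$ and $\log_r s \notin \Q$. Each box $B_i$ is invariant under the reflections $(p,q) \mapsto (M_i - p, q)$ and $(p,q) \mapsto (p, N_i - q)$ (with $(M_1,N_1)=(7,1)$ and $(M_2,N_2)=(3,2)$), so composing $\phi$ with these reflections as needed flips the sign of $\beta$ or $\gamma$ (while merely shifting $\delta$) and stays inside the same similarity class; this arranges $\beta, \gamma > 0$. For irrationality, if $\gamma/\beta = p/q$ in lowest terms, then $q(b+d\alpha) = p(a+c\alpha)$ combined with $\alpha \notin \Q$ forces $qd = pc$ and $pa = qb$, and $\gcd(p,q)=1$ then yields $a = qa'$, $b = pa'$, $c = qc'$, $d = pc'$ for some integers $a', c'$, giving $ad-bc = 0$ and contradicting invertibility. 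The main obstacle, such as it is, is this bookkeeping needed to properly unpack the similarity class and secure positivity of $\beta, \gamma$; everything else is a direct application of the log/exp dictionary.
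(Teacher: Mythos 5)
Your approach is the same as the paper's---transfer Corollary \ref{sumcornew} to the product side via the logarithm dictionary and unpack the similarity class of $P$---and you go further than the paper in spelling out the algebra behind ``invertible linear transformation of $P$ corresponds to changing $r$ and $s$,'' including verifying $\beta,\gamma\neq 0$ (so positivity can be arranged by reflecting the box) and that $\gamma/\beta\in\Q$ would force $ad-bc=0$. That bookkeeping is correct.

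One point you gloss over: you set $L=\{\log a : a\in A\}$ with the natural logarithm and then invoke cases (2)--(3) of Corollary \ref{sumcornew}, which assert $L=\{m+n\alpha:(m,n)\in P\}$ on the nose. That equality requires $L$ to be \emph{reduced} (smallest two elements $0$ and $1$), which a natural-log image generally isn't --- e.g.\ for $A=\{1,2,3,4,6,8,9,12,16,18\}$ one gets $L=\{m\log 2+n\log 3:(m,n)\in P\}$, which is only of the form $\{m+n\alpha\}$ after dividing by $\log 2$. Corollary \ref{sumcornew} is derived from Lemma \ref{1029sum} and Corollary \ref{1029sumcor}, both stated for reduced $L$, so the hypothesis is implicitly present even though the corollary's wording omits it. The paper handles this by first scaling $A$ so $\min(A)=1$ and then taking $L=\{\log_r(x):x\in A\}$ with $r$ the second-smallest element, which makes $L$ reduced by construction; you should do the same (or, equivalently, translate and dilate $L$ before applying the corollary, noting that dilation of $L$ corresponds to raising $A$ to a positive power, which also preserves the target structure). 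With that normalization in place the rest of your argument goes through as written.
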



\begin{proof} Suppose $A\subseteq (0,\infty)$ with $|A|=10$ and $|AA|\leq 29$, or $|A|=11$ and $|AA|\leq 33$. After scaling, assume $1=\min(A)$, and let $r>1$ be the least element of $A$ besides $1$. Let $L=\{\log_r(x): x\in A\}$, so $|L|=10$ and $|L+L|\leq 29$, or $|L|=11$ and $|L+L|\leq 33$ and the two least elements of $L$ are $0$ and $1$. 

\noindent If at least $8$ elements of $L$ are contained in a single arithmetic progression, then at least $8$ elements of $A$ are contained in a single geometric progression. Otherwise, $L$ has property (2) or (3) from Corollary \ref{sumcornew}, with the corresponding set of ordered pairs $P$ requiring at most translation and invertible linear transformation. Letting $s=r^{\alpha}$, the classification follows, noting that translation of $P$ corresponds to scaling $A$, while invertible linear tranformation of $P$ corresponds to changing $r$ and $s$. 
\end{proof}

\begin{rem} The example in Table \ref{UBtable} for $k=10$ corresponds to item (iii) in Lemma \ref{1029sum} with $r=2$, $s=3$, and $(k_1,k_2,k_3)=\{5,3,2\}$. The example for $k=11$ is the same except $k_2=4$.
\end{rem}

\section{Collision control} \label{ccs}

In this section, we provide lower bounds on the number of distinct sums determined by subsets of $G_1=\{r^ms^n: 0\leq m \leq 7, n\in \{0,1\}\}$ and $G_2=\{r^ms^n: 0\leq m \leq 3, 0\leq n \leq 2\}$, with $r,s\in \Q$, $r,s>1$, and $\log_r(s)\notin \Q$. We take an approach that is similar in spirit to \cite[Section 3.1]{SP2023}, but is more purpose-built and computationally aided. To give the idea, suppose $A\subseteq G_1$ with $|A|=k$, and let $B=A\cap\{1,r,\dots,r^7\}$ and $C=A\cap\{s,rs,\dots,r^7s\}$. We know from Lemma \ref{sg} that $B$ and $C$ are Sidon sets, so $|A+A|$ is only drawn away from its maximum value $(k^2+k)/2$ by intersections amongst $B+B,C+C,$ and $B+C$, as well as elements with multiple representations in $B+C$. In  \cite[Lemma 3.5]{SP2023}, it is shown that each of these four types of \textit{two-row collisions} are restricted to a single geometric family. To clarify, consider $(B+B)\cap (B+C)$, elements of which correspond to solutions to the equation $r^a+r^b=r^c+sr^d$, and every such solution can be translated (meaning scaled by a power of $r$) to make $\min(a,b,c,d)=0$. The approach of \cite[Lemma 3.5]{SP2023} is to show, via the rational root theorem, that there is only one such solution with $\min(a,b,c,d)=0$, except when $r=2$. In nonexceptional cases, all other solutions are translations of that one, which limits the total number of solutions to the number of  translations. Repeating this process for all four collision types, and adding together the number of translations,  yields \cite[Corollary 3.7]{SP2023}.

Here we sharpen this approach in two ways, and then extend it in one. First, we keep track not just of exceptional values of $r$, but of all exceptional $(r,s)$ pairs, so that we can then check them all individually after the fact. Further, we not only establish that each collision type is restricted to a single geometric family, but also that only one of the four can actually occur for a given set $A$, except in a small family of special cases, which we call \textit{double collisions}, corresponding to the system $1+r^3=s+rs$, $1+r^2=r+s$, satisfied when $s=r^2-r+1$. Therefore, $\frac{k^2+k}{2}-|A+A|$ is bounded by the number of translations of one solution in the typical case, or a specific pair of solutions in the double collision case. See Figure \ref{colfigure} below for illustrations in which $\frac{k^2+k}{2}-|A+A|$ is $4$ and $7$, respectively.  Finally, we repeat the approach for $G_2$ and investigate all potential types of \textit{three-row collisions} between $\{1,r,r^2,r^3\}$, $\{s,rs,r^2s,r^3s\}$, and $\{s^2,rs^2,r^2s^2,r^3s^3\}$, arriving at analogous conclusions. We achieve these goals via Python computation, and as a result our conclusions are limited to configurations within $G_1$ and $G_2$, rather than more general two-dimensional geometric progressions. 

\begin{figure}[H]
\captionsetup[subfigure]{font=footnotesize}
\centering 
\hfill 
\begin{subfigure}[t]{0.25\textwidth}
\centering
\resizebox{\linewidth}{!}{%
\begin{tikzpicture}

    \fill[blue!20] (0,2) rectangle (5,1);
    \fill[blue!20] (0,1) rectangle (4,0);

    \draw[step=1cm, black, ultra thick] (0,2) grid (5,0)
        (0.5,2.45) node [ultra thick, black] {\huge $1$} 
        (1.5,2.38) node [ultra thick, black] {\huge $r$} 
        (2.5,2.5) node [ultra thick, black] {\huge $r^2$} 
        (3.5,2.5) node [ultra thick, black] {\huge $r^3$} 
        (4.5,2.5) node [ultra thick, black] {\huge $r^4$}
        (-0.5,1.5) node [ultra thick, black] {\huge $1$} 
        (-0.5,0.5) node [ultra thick, black] {\huge $s$};

    \coordinate (c11) at (0.5,1.5);   
    \coordinate (cr31) at (3.5,1.7);  
    \coordinate (c1s) at (0.5,0.5);   
   
    \coordinate (crs) at (1.5,0.3);   
    \coordinate (cr21) at (2.5,1.5);  
    \coordinate (cr1) at (1.5,1.5);   

    \draw[red, thick] (c11) -- (c1s); 
    \draw[->, red, thick, out=130, in=50, distance=0.7cm] (cr1) to (cr1);

    \foreach \pt in {c11,cr1,c1s} {
        \fill[black] (\pt) circle (1.2pt);
    }

\end{tikzpicture}}
\caption{Single collision: $1+s=2r$, which has  four translations.}
\end{subfigure}\hfill
\begin{subfigure}[t]{0.29\textwidth}
\centering
\resizebox{\linewidth}{!}{%
\begin{tikzpicture}

    \fill[yellow!20] (0,2) rectangle (6,1);
    \fill[yellow!20] (0,1) rectangle (4,0);

    \draw[step=1cm, black, ultra thick] (0,2) grid (6,0)
        (0.5,2.45) node [ultra thick, black] {\huge $1$} 
        (1.5,2.38) node [ultra thick, black] {\huge $r$} 
        (2.5,2.5) node [ultra thick, black] {\huge $r^2$} 
        (3.5,2.5) node [ultra thick, black] {\huge $r^3$} 
        (4.5,2.5) node [ultra thick, black] {\huge $r^4$}
        (5.5,2.5) node [ultra thick, black] {\huge $r^5$}
        (-0.5,1.5) node [ultra thick, black] {\huge $1$} 
        (-0.5,0.5) node [ultra thick, black] {\huge $s$};

    \coordinate (c11) at (0.5,1.5);   
    \coordinate (c11a) at (0.5,1.7);   
    \coordinate (cr31) at (3.5,1.7);  
    \coordinate (c1s) at (0.5,0.5);   
    \coordinate (c1sa) at (0.5,0.3);   
    \coordinate (crs) at (1.5,0.3);   
    \coordinate (cr21) at (2.5,1.5);  
    \coordinate (cr1) at (1.5,1.5);   
    \coordinate (cr1a) at (1.5,1.3);   

    \draw[red, thick] (c11a) -- (cr31); 
    \draw[red, thick] (crs) -- (c1sa);  

    \draw[blue, thick] (c11) -- (cr21); 
    \draw[blue, thick] (cr1a) -- (c1s);  

    \foreach \pt in {c11,c11a,cr31,c1s,c1sa,crs,cr21,cr1a} {
        \fill[black] (\pt) circle (1.2pt);
    }

\end{tikzpicture}}
\caption{Double collision: $1+r^3=s+rs$ (red) has three translations, while  $1+r^2=r+s$ (blue) has four.}
\end{subfigure}\hfill \
\caption{Single and double two-row collisions.}
\label{colfigure}
\end{figure}
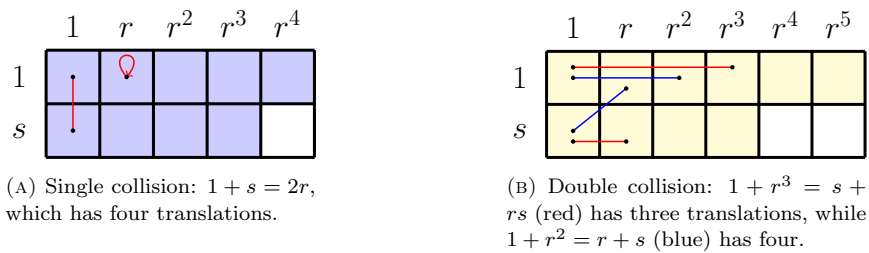

\newpage

\subsection{Collision control computations} Every two-row collision family in $G_1$ corresponds to a solution of the form $p(r,s)=r^a+s^tr^b-s^ur^c-s^vr^d=0$, where $a,b,c,d\in \{0,\dots,7\}$ with $\min(a,b,c,d)=0$ and $(t,u,v)\in T_1=\{(0,0,1),(0,1,1),(1,0,1),(1,1,1)\}$. Further, some nonredundancy and nondegeneracy conditions can be imposed, for example if $t=0$, we can assume $a\geq b$ by commutativity, while if $u=0$, we can assume $a\neq c$ because $s$ is not a rational power of $r$. We want to  determine all $(r,s)$ pairs for which multiple collision types can occur simultaneously, or two families of the same type can occur simultaneously. We index these checks with tuples $(t,u,v,x,y,z)$, defining $p(r,s)=r^a+s^tr^b-s^ur^c-s^vr^d$ and $q(r,s)=r^e+s^xr^f-s^yr^g-s^zr^h$, and look for solutions to the system $p(r,s)=q(r,s)=0$. This allows additional filters to be imposed, such as if $(t,u,v)=(x,y,z)$ we can ignore cases where $(a,b,c,d)=(e,f,g,h)$, and if $(0,t)=(u,v)=(0,x)=(y,z)$, we can ignore cases when $(a,b,c,d)=(g,h,e,f)$. We loop through exponent tuples $(t,u,v,x,y,z)$ with $(t,u,v),(x,y,z)\in T_1$, then loop through octuples $(a,b,c,d,e,f,g,h)\in \{0,1,\dots,7\}^8$, imposing $\min(a,b,c,d)=\min(e,f,g,h)=0$ and all nonredundancy and nondegeneracy conditions through a function GoodOctuple$(a,b,c,d,e,f,g,h,t,u,v,x,y,z)$. For three-row collisions in $G_2$, we do a similar loop, but with $(t,u,v),(x,y,z)\in T_2=\{(0,1,1),(0,0,1),(1,1,1),(1,0,1),(0,2,2),(0,0,2),(2,2,2),(2,0,2),$ $(1,1,2), (2,1,1),(1,2,2),(2,1,2),(0,1,2),(1,0,2)\}$ and $(a,b,c,d,e,f,g,h)\in \{0,1,2,3\}^8$.

Recall that for $p(r,s)$ and $q(r,s)$, we can compute two separate polynomial \textit{resultants}: $R_1(r)=$res$_s(p,q)$ is a polynomial in $r$ only with the property that $R_1(r_0)=0$ if and only if there exists $s_0$ such that $p(r_0,s_0)=q(r_0,s_0)=0$, while $R_2(s)=$res$_r(p,q)$ is an analogous polynomial in $s$ only. If these resultants are nonzero, then there are only finitely many pairs $(r,s)$ satisfying $p(r,s)=q(r,s)=0$, which can be added to our list of exceptions if $r,s\in \Q$, $r,s>1$, and $\log_r(s)\notin \Q$. The resultants are each $0$ if and only if $p$ and $q$ share a common factor, which is a potentially harmful scenario. However, these common factors may be spurious for our purposes. For example, if $\gcd(p,q)=r-s$, then the infinitely many simultaneous solutions come when $r=s$, which we forbid. Other ignorable instances include $\gcd$'s with no rational roots, or that force one of $r$ or $s$ to be negative. The following algorithm desribes how we collect all possible $\gcd$'s for all two and three-row collisions at once. We run PossibleGCDs$(8,E_1)$ (runtime 15 hours), where $E_1$ encodes pairs of triples in $T_1$, and PossibleGCDs$(4,E_2)$ (runtime $5$ hours), where $E_2$ encodes pairs of triples in $T_2$. The output consists of ignorable $\gcd$'s, except for ones corresponding to double collisions as in Figure \ref{colfigure}(B).

\begin{algorithm}[H]
\textbf{PossibleGCDs}
\begin{algorithmic}
\State{\textbf{Input:} $L \in \N$, ExponentTuples}
\State GCDdict $\gets$ empty dictionary
\For{$(t,u,v,x,y,z)\in$ ExponentTuples}
	\State GCDdict$(t,u,v,x,y,z) \gets \{\}$
	\For{$(a,b,c,d,e,f,g,h)\in\{0,\dots,L-1\}^8$}
		\If{\Call{GoodOctuple}{$a,b,c,d,e,f,g,h,t,u,v,x,y,z$}}
			\State $p\gets r^a+s^tr^b-s^ur^c-s^vr^d$
			\State $q\gets r^e+s^xr^f-s^yr^g-s^zr^h$
			\State $D \gets$ \Call{GCD}{$p,q$}
			\If{not \Call{IsConstant}{D}}
				\State GCDdict$(t,u,v,x,y,z) \gets$ GCDdict$(t,u,v,x,y,z)\cup\{D\}$
			\EndIf
		\EndIf
	\EndFor

\EndFor
\State \Return GCDdict
\end{algorithmic}
\end{algorithm}

We have now accounted for simultaneous collisions corresponding to common factors of $p(r,s)$ and $q(r,s)$, so when searching for solutions to the system $p(r,s)=q(r,s)=0$, we can first compute $D=\gcd(p,q)$ and replace $p$ and $q$ with $p/D$ and $q/D$, respectively, which will have nonzero resultants. The following algorithm collects instances of simultaneous collisions, given a row length and a list of collision type pairs.  

\begin{algorithm}[H]
\textbf{CollisionCheck}
\begin{algorithmic}
\State{\textbf{Input:} $L \in \N$, ExponentTuples}
\State CollisionDict $\gets$ empty dictionary
\For{$(t,u,v,x,y,z)\in$ ExponentTuples}
	\State CollisionDict$(t,u,v,x,y,z) \gets \{\}$
	\For{$(a,b,c,d,e,f,g,h)\in\{0,\dots,L-1\}^8$}
		\If{\Call{GoodOctuple}{$a,b,c,d,e,f,g,h,t,u,v,x,y,z$}}
			\State $p\gets r^a+s^tr^b-s^ur^c-s^vr^d$
			\State $q\gets r^e+s^xr^f-s^yr^g-s^zr^h$
			\State $D \gets$ \Call{GCD}{$p,q$}
			\State $p \gets p/D$
			\State $q \gets q/D$
			\State Res1 $\gets$ \Call{Resultant}{$p,q,s$}
			\State Roots1 $\gets$ \Call{RationalRoots}{Res1,$r$}
			\State Res2 $\gets$ \Call{Resultant}{$p,q,r$}
			\State Roots2 $\gets$ \Call{RationalRoots}{Res2,$s$}
			\For{$(r_0,s_0)\in$ Roots1 $\times$ Roots2 with $r_0,s_0>1$}
				\If{$p(r_0,s_0)=q(r_0,s_0)=0$}
					\State CollisionDict$(t,u,v,x,y,z) \gets$ CollisionDict$(t,u,v,x,y,z) \cup \{(r_0,s_0,a,b,c,d,e,f,g,h)\}$
				\EndIf
			\EndFor

			\EndIf
		
	\EndFor

\EndFor
\State \Return CollisionDict
\end{algorithmic}
\end{algorithm}

\subsection{Two-row collisions} Running CollisionCheck$(8,E_1)$ (runtime $16$ hours), where $E_1$ encodes all pairs of triples in $T_1$, then extracting all $(r,s)$ pairs from the resulting collision dictionary, yields the following.

\begin{lemma}\label{tworows} Suppose $r,s\in \Q$ with $r,s>1$ and $\log_r(s)\notin \Q$. Apart from $155$ exceptional $(r,s)$ pairs (all with $r\in \{2,3,3/2\}$), there are at most two total quadruples $(a,b,c,d)$ with $0\leq a,b,c,d \leq 7$ and $\min(a,b,c,d)=0$ satisfying any of the following equations with the given non-redundancy conditions:

\begin{enumerate}[(i)] \item $r^a+r^b= s(r^c+r^d)$, $a\geq b$, $c\geq d$
\item $r^a+r^b= r^c+sr^d$, $a\geq b$, 
\item $r^a+sr^b= s(r^c+r^d)$, $c\geq d$,
\item $r^a+sr^b= r^c+sr^d$, $a>c$,
\end{enumerate}
and no quadruple satisfies more than one of the four equations. Further, two total solutions only occur when $s=(r^{2\ell}-r^{\ell}+1)/r^j$, $\ell\in\{1,2\}$, $0\leq j <2\ell$, and the quadruples are $(3\ell,0,j+\ell,j)$, $(2\ell,0,\ell,j)$, solving equations (i) and (ii), respectively, or $s=r^{j}/(r^{2\ell}-r^{\ell}+1)$, $j\geq 2\ell$, and the quadruples are $(j+\ell,j,3\ell,0)$, $(j,\ell,2\ell,0)$, solving equations (i) and (iii), respectively. 
\end{lemma}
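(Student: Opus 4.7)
My plan is to recast the lemma as a question about common rational zeros of pairs of two-row polynomials, and then execute the algorithmic framework outlined in the section. Each of equations (i)--(iv) corresponds to a polynomial of the form $p(r,s) = r^a + s^t r^b - s^u r^c - s^v r^d$ with $(t,u,v) \in T_1 = \{(0,0,1), (0,1,1), (1,0,1), (1,1,1)\}$, and the nonredundancy conditions in (i)--(iv) match the constraints encoded in GoodOctuple. If some $(r,s)$ satisfying the hypotheses admits either two quadruples solving the same equation type or one quadruple from each of two types, then $(r,s)$ is a common zero of two such polynomials $p$ and $q$. So it suffices to enumerate all $(r,s) \in \Q_{>1}^2$ with $\log_r(s) \notin \Q$ that arise as common zeros of any admissible pair $(p,q)$.

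The enumeration splits by whether $\gcd(p,q)$ is trivial. In the trivial case, elimination theory gives that the resultants $\text{res}_s(p,q) \in \Q[r]$ and $\text{res}_r(p,q) \in \Q[s]$ are nonzero, and any common zero has both coordinates among their rational roots. I would run CollisionCheck$(8, E_1)$, looping over the exponent-type pairs in $E_1$ and the octuples $(a,b,c,d,e,f,g,h) \in \{0,\dots,7\}^8$ passing GoodOctuple, dividing out any spurious GCD, computing the two resultants, extracting rational roots, and verifying for each candidate $(r_0, s_0)$ that $r_0, s_0 > 1$, $\log_{r_0}(s_0) \notin \Q$, and $p(r_0, s_0) = q(r_0, s_0) = 0$. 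This produces a finite list, which we expect to consist of the 155 exceptional pairs with $r \in \{2, 3, 3/2\}$.

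The delicate case, and the anticipated main obstacle, is the nontrivial-GCD branch, where $p$ and $q$ share an irreducible factor $D(r,s)$ and hence have infinitely many simultaneous zeros along the curve $D = 0$. I would first run PossibleGCDs$(8, E_1)$ to exhaustively enumerate every such $D$, then classify each as either spurious — i.e., having no zero with $r,s \in \Q_{>1}$ and $\log_r(s) \notin \Q$, typical examples being $D = r - s$, $D$ with no positive rational roots, or $D$ forcing one coordinate to be a rational power of the other — or genuine. The claim to verify is that after symmetry reduction (swapping the two rows and swapping $(a,b) \leftrightarrow (c,d)$), every genuine $D$ corresponds to the factor $r^j s - (r^{2\ell} - r^\ell + 1)$ with $\ell \in \{1,2\}$ and $0 \leq j < 2\ell$, or its reciprocal variant with $j \geq 2\ell$, yielding precisely the quadruple pairs $\{(3\ell, 0, j+\ell, j), (2\ell, 0, \ell, j)\}$ solving equations (i) and (ii) (respectively (i) and (iii)). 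This requires inspecting the PossibleGCDs output case by case, confirming algebraically that the stated $D$ divides both $p$ and $q$ for exactly those two quadruple pairs, and ruling out all other candidate GCDs as spurious under the hypotheses.

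The remaining assertion --- that no single quadruple satisfies two of the four equations simultaneously --- can be folded into the same search by treating it as a degenerate octuple with $(a,b,c,d) = (e,f,g,h)$ but $(t,u,v) \neq (x,y,z)$; the same resultant/GCD machinery shows no $(r,s)$ satisfying our constraints can arise. Given that the resultant and GCD computations are finitary, deterministic, and verifiable symbolically by a computer algebra system, the substance of the proof is essentially the careful bookkeeping of the GCD classification; the numerical totals and the restriction $r \in \{2, 3, 3/2\}$ are then read off directly from the CollisionCheck output.
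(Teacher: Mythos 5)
Your proposal reproduces the paper's own strategy: encode each equation type as a bivariate polynomial with exponent triple in $T_1$, split on whether $\gcd(p,q)$ is trivial, use resultants and rational-root extraction (CollisionCheck) in the trivial case to produce the finite list of exceptional $(r,s)$, and classify the nontrivial GCDs (PossibleGCDs) as either spurious or the double-collision family $s = (r^{2\ell}-r^\ell+1)/r^j$ and its reciprocal, with the ``no quadruple satisfies two equations'' claim handled by allowing $(a,b,c,d)=(e,f,g,h)$ when $(t,u,v)\neq(x,y,z)$ in the octuple loop. The only cosmetic difference is that the paper runs PossibleGCDs once with $E_2$ (a superset of $E_1$) to cover both the two-row and three-row GCD inventories simultaneously, whereas you propose running it with $E_1$; this does not change the substance.
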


\noindent Note that all of the double collision cases enumerated in Lemma \ref{tworows} come from transformations of the aforementioned system $1+r^3=s+rs$, $1+r^2=r+s$ when $s=r^2-r+1$, via scaling by powers of $r$, taking reciprocals, or substituting $r^2$ for $r$. 

Returning to the discussion from the beginning of the section, now that we know collisions are restricted to one or two total geometric families in nonexceptional cases, we can analyze the amount of damage they can do by counting translations. Again letting $B=A\cap\{1,r,\dots,r^7\}$ and $C=A\cap\{s,rs,\dots,r^7s\}$, we note that is not possible for a collision to involve only one distinct element from each of $B$ and $C$, as $s$ is not equal to a rational power of $r$. In particular, for each single family of collisions, the number of translations is either bounded by $\min(|B|-1,|C|)$, if more than one element of $B$ is involved, or $\min(|B|,|C|-1)$. In other words, letting $\{k_1,k_2\}=\{|B|,|C|\}$ with $k_1\geq k_2$, the number of translations is bounded by $\min\{k_1-1,k_2\}$. Further, in the double collision case as in Figure \ref{colfigure}(B), the first family corresponds to $1+r^3=s+rs$, which involves two elements from each of $B$ and $C$, so the number of translations is bounded by $k_2-1$, while the second family corresponds to $1+r^2=r+s$, which involves three elements of $B$ and one element of $C$, so the number of translations is bounded by $\min(k_1-2,k_2)$. Putting all of these observations together yields the following specialized sharpening of \cite[Corollary 3.7]{SP2023}.

\begin{corollary} \label{2rowcor} Suppose $r,s\in \Q$ with $r,s>1$, $\log_r(s)\notin \Q$, and $(r,s)$ is not an exceptional pair as defined in Lemma \ref{tworows}. If  $A\subseteq\{r^ms^n: 0\leq m\leq 7, n\in \{0,1\}\}$ and $|A|=k\geq 3$, then $$|A+A|\geq \frac{k^2+k}{2}-\begin{cases}\min\{k-3,2k_2-1\} &\text{if double collision and }  k_2\geq 1 \\ \min\{k_1-1,k_2\} & \text{else}\end{cases},$$ where $\{k_1,k_2\}=\{|A\cap\{1,r,\dots,r^7\}|, |A\cap\{s,rs,\dots,r^7s\}|\}$, $k_1\geq k_2$. 
\end{corollary}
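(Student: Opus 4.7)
The plan is to upper-bound the \emph{collision deficit} $N := \tfrac{k^2+k}{2} - |A+A| = \sum_v(m_v - 1)$, where $m_v$ is the multiplicity of $v$ as a sum over unordered pairs from $A$. By Lemma \ref{sg}, since $r \in \Q$ and $r > 1$, both $B$ and $C$ are Sidon sets, so every coincidence contributing to $N$ is cross-row. After translating exponents to have minimum $0$, each such coincidence corresponds to a base quadruple satisfying one of equations (i)--(iv) in Lemma \ref{tworows}, and the non-exceptional hypothesis guarantees at most two base quadruples in total: exactly one in the single collision case, and exactly two (from equations (i) and (ii), or (i) and (iii)) in the double collision case.

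Each realized collision in $A$ arises from a base quadruple via translation by a power $r^j$, valid precisely when all four shifted exponents index elements of $A$. The number of valid $j$ for a base quadruple is bounded by row constraints: if the quadruple has $\geq 2$ distinct exponents in row $B$ (resp.\ $C$), valid $j$ correspond to pairs in $B$ (resp.\ $C$) at a prescribed fixed difference, hence at most $|B|-1$ (resp.\ $|C|-1$); if it has $\geq 3$ distinct exponents in one row, the bound tightens to $|B|-2$ (resp.\ $|C|-2$), as readily checked for the $\{0,1,2\}$ and $\{0,2,4\}$ patterns arising in Lemma \ref{tworows}.

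In the single collision case, the unique base quadruple has row split $(e_B, e_C) \in \{(2,2), (3,1), (1,3)\}$; enumerating these against the two assignments of $\{|B|, |C|\}$ to $\{k_1, k_2\}$ and taking the worst case yields a uniform translation bound $\min\{k_1-1, k_2\}$, which upper-bounds $N$ as required. In the double collision case, Lemma \ref{tworows} pins down two base quadruples: one from equation (i) with split $(2,2)$ (translation bound $\min(|B|-1, |C|-1) = k_2-1$) and one from equation (ii) or (iii) with split $(3,1)$ or $(1,3)$ (translation bound $\min(k_1-2, k_2)$). Summing yields $N \leq (k_2-1) + \min(k_1-2, k_2) = \min\{k-3, 2k_2-1\}$.

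The main obstacle is verifying the per-family translation bounds, especially the $|B|-2$ and $|C|-2$ estimates for the three-exponent configurations, which rely on the specific $\{0,1,2\}$ and $\{0,2,4\}$ patterns from Lemma \ref{tworows}. Secondarily, one must confirm that translations from the two families in the double collision case do not produce spuriously coinciding sum values, which is automatic in the non-exceptional setting: any such coincidence would yield a third base quadruple, contradicting Lemma \ref{tworows}.
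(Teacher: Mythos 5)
Your proposal follows the same approach as the paper: bound the collision deficit $N = \tfrac{k^2+k}{2} - |A+A|$ by counting valid translations of the one or two base-quadruple families guaranteed by Lemma \ref{tworows}, noting each translation contributes at most one to $N$. The per-family translation bounds ($|B|-1$ for two distinct $B$-exponents at a fixed difference, $|B|-2$ for three in arithmetic-type configuration, and symmetrically for $C$) are all correct, as is the summation $(k_2-1)+\min(k_1-2,k_2)=\min\{k-3,2k_2-1\}$ in the double-collision case. Two small remarks. First, your claimed list of row splits $(e_B,e_C)\in\{(2,2),(3,1),(1,3)\}$ for the single-collision case is not exhaustive: equations (i)--(iv) permit splits $(2,1)$ and $(1,2)$ (e.g., $2r^a = r^c + sr^d$ with $a\neq c$, from (ii) with $a=b$), since the non-redundancy conditions $a\geq b$, $c\geq d$ allow equality. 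The paper's phrasing --- ``if more than one element of $B$ is involved, bound $\min(|B|-1,|C|)$, else $\min(|B|,|C|-1)$'' --- covers all splits at once and gives the clean upper bound $\min\{k_1-1,k_2\}$ directly; your split-by-split enumeration also yields this bound for the omitted cases, but you should include them. Second, the concern in your final paragraph about spurious coincidences between the two families is actually a non-issue for a cleaner reason than the one you offer: a three-way coincidence at a single sum value $v$ (multiplicity $m_v=3$) contributes $2$ to $N$ but $3 = \binom{3}{2}$ to the count of pairs of coinciding pairs, so $N \leq \sum_v\binom{m_v}{2}$ always, and the latter injects into (family, $j$)-pairs regardless of whether sum values collide across families.
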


\noindent Applying Corollary \ref{2rowcor} in the worst-case scenario $k_1=\lceil k/2 \rceil$ yields the following more consise lower bounds, which represent a substantial improvement over the formulas provided in \cite[Theorem 3.3]{SP2023}.

\begin{corollary} \label{2rowcor2} Suppose $r,s\in \Q$ with $r,s>1$, $\log_r(s)\notin \Q$, and $(r,s)$ is not an exceptional pair as defined in Lemma \ref{tworows}. If  $A\subseteq\{r^ms^n: 0\leq m\leq 7, n\in \{0,1\}\}$ and $|A|=k\geq 3$, then $$|A+A|\geq \begin{cases} (k^2-k+6)/2 &\text{if double collision}  \\ (2k^2+3+(-1)^{k})/4 & \text{else}\end{cases}.$$  
\end{corollary}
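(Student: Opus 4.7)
The plan is to apply Corollary \ref{2rowcor} and maximize the subtracted ``loss'' term on its right-hand side over all integer partitions $k_1+k_2=k$ with $k_1\geq k_2\geq 0$. Since the bound on $|A+A|$ must hold uniformly in $A$, and the pair $(k_1,k_2)$ is determined by $A$, the task reduces to subtracting the worst-case loss from $(k^2+k)/2$.

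For the non-double-collision branch, I would maximize $\min\{k_1-1,k_2\}$ subject to $k_1+k_2=k$, $k_1\geq k_2$. Parametrizing $k_1=\lceil k/2\rceil+j$ and $k_2=\lfloor k/2\rfloor-j$ for integer $j\geq 0$, the value $k_2$ decreases by $1$ and $k_1-1$ increases by $1$ per unit step, so their minimum is nonincreasing in $j$ and is therefore maximized at $j=0$: namely $k/2-1$ when $k$ is even and $(k-1)/2$ when $k$ is odd. Subtracting from $(k^2+k)/2$ produces $(k^2+2)/2$ and $(k^2+1)/2$ respectively, and direct substitution confirms that both are captured by the unified formula $(2k^2+3+(-1)^k)/4$.

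For the double-collision branch, I would maximize $\min\{k-3,\,2k_2-1\}$ subject to $k_1+k_2=k$, $k_1\geq k_2\geq 1$. The first argument is constant in $k_2$ and the second strictly increases, so the maximum equals $\min\{k-3,\,2\lfloor k/2\rfloor-1\}$, realized at $k_2=\lfloor k/2\rfloor$; for $k$ even this is $\min\{k-3,k-1\}=k-3$, and for $k$ odd it is $\min\{k-3,k-2\}=k-3$, with the constraint $k_2\geq 1$ automatic once $k\geq 3$. Subtracting gives $|A+A|\geq(k^2+k)/2-(k-3)=(k^2-k+6)/2$. No substantive obstacle arises; the whole argument is a routine parity-aware optimization of a piecewise-linear function subject to an integer sum constraint, with the only care needed being the parity split in the first branch and the boundary check $k=3$ in the second.
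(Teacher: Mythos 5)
Your proposal is correct and is exactly the paper's intended derivation: the paper only states that Corollary \ref{2rowcor2} follows from Corollary \ref{2rowcor} ``in the worst-case scenario $k_1 = \lceil k/2 \rceil$,'' and your parity-split maximization of the piecewise-linear loss term (including the check that $k_2 = 0$ is dominated and that the $k_2 \geq 1$ constraint is automatic for $k \geq 3$) fills in precisely that computation.
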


\subsection{Three-row collisions} Running CollisionCheck$(4,E_2)$ (runtime 6 hours), where $E_2$ encodes all pairs of triples in $T_2$, then extracting all $(r,s)$ pairs from the resulting collision dictionary, yields the following.

\begin{lemma} \label{threerows} Suppose $r,s\in \Q$ with $r,s>1$ and $\log_r(s)\notin \Q$. Apart from $75$ exceptional $(r,s)$ pairs (all with $r\in \{2,3,4,5,3/2,5/2,4/3,5/3,5/4,9/5\}$), there are at most two quadruples $(a,b,c,d)$ with $0\leq a,b,c,d \leq 3$ and $\min(a,b,c,d)=0$ satisfying any of the following equations with the given non-redundancy conditions:

\begin{enumerate}[(i)] 
\item $r^a+r^b= s^2(r^c+r^d)$, $a\geq b$,$c\geq d$
\item $r^a+r^b= r^c+s^2r^d$, $a\geq b$,
\item $r^a+s^2r^b = s^2(r^c+r^d)$, $c\geq d$,
\item $r^a+s^2r^b = r^c + s^2r^d$, $a>c$
\item $r^a+r^b=sr^c+s^2r^d$, $a\geq b$,
\item $r^a+sr^b = r^c+s^2r^d$, 
\item $r^a+sr^b= sr^c+s^2r^d$, 
\item $r^a+s^2r^b=s(r^c+r^d)$, $c\geq d$
\item $r^a+sr^b=s^2(r^c+r^d)$, $c\geq d$
\item $r^a+s^2r^b=sr^c+s^2r^d$,

\end{enumerate} 
 and no quadruple satisfies more than one of the ten equations. Further, two total solutions only occur in the double collision case discussed in Lemma \ref{tworows}, with $\ell=1$ and $s$ replaced by $s^2$. Finally, if any of the four equations in Lemma \ref{tworows} have a solution, then the ten equations listed above do not.
\end{lemma}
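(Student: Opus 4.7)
The plan is to prove Lemma \ref{threerows} by a computational approach paralleling the strategy employed for Lemma \ref{tworows}, now with row length $L=4$ and the larger index set $T_2$ of exponent triples that encodes the three-row collision families in $G_2$. For each pair of triples $\{(t,u,v),(x,y,z)\} \subseteq T_2$ and each octuple $(a,b,c,d,e,f,g,h) \in \{0,1,2,3\}^8$ satisfying \textbf{GoodOctuple}, I form the polynomials $p(r,s) = r^a + s^t r^b - s^u r^c - s^v r^d$ and $q(r,s) = r^e + s^x r^f - s^y r^g - s^z r^h$ corresponding to the two chosen collision instances, and seek simultaneous solutions $(r,s)$ with $r,s \in \mathbb{Q}$, $r,s > 1$, and $\log_r(s) \notin \mathbb{Q}$.

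First I would invoke \textbf{PossibleGCDs}$(4,E_2)$ to collect every nonconstant $D = \gcd(p,q)$ that can arise, and verify case by case that each is \emph{ignorable}: either $D$ has no rational roots, or every rational root forces $r=s$, $r\leq 0$, $s\leq 0$, or $\log_r(s)\in \mathbb{Q}$. The only exceptions correspond to the double collision identified in Lemma \ref{tworows}, specialized to $\ell=1$ with $s$ replaced by $s^2$, namely $s^2 = r^2-r+1$ and its translates by powers of $r$ and reciprocals. After classifying these, I replace $p,q$ by $p/D,q/D$ throughout so that the resultants $\text{res}_s(p/D,q/D)$ and $\text{res}_r(p/D,q/D)$ are both nonzero.

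Next I run \textbf{CollisionCheck}$(4,E_2)$, which loops over all admissible pairs, computes the two resultants, extracts their rational roots, and tests each candidate $(r_0,s_0)$ with $r_0,s_0 > 1$ for simultaneous satisfaction of $p=q=0$, discarding those with $\log_{r_0}(s_0) \in \mathbb{Q}$. Compiling all surviving pairs yields the list of $75$ exceptional $(r,s)$, from which one reads off that $r$ always lies in $\{2,3,4,5,3/2,5/2,4/3,5/3,5/4,9/5\}$. The characterization of when two quadruples can simultaneously satisfy the system reduces to the GCD analysis: the only nonexceptional route is through the nontrivial common factor identified above, which matches the specialized double collision. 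For the compatibility claim with Lemma \ref{tworows}, I would run a direct cross-check: for each of the $155$ two-row exceptional pairs, plug $(r,s)$ into each of the ten three-row polynomial families and confirm that no admissible quadruple yields a zero.

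The main obstacle is the GCD bookkeeping. The larger set $T_2$ admits many more nontrivial common factors than in the two-row case, so verifying that each is either ignorable or matches the known double collision requires systematic interpretation of the \textbf{PossibleGCDs} output. Additional care is needed with the non-redundancy conditions inside \textbf{GoodOctuple}: they must be strong enough to suppress duplicate counting under the symmetries intrinsic to $T_2$ (swapping sides of an equation, interchanging $s$ and $s^{-1}$ via reciprocal, or permuting terms on a single side) yet weak enough not to miss a genuinely distinct collision family.
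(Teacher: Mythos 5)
Your proposal takes essentially the same computational route as the paper: run \textbf{PossibleGCDs} over the $T_2$ collision types to classify ignorable common factors and isolate the double-collision family ($s^2=r^2-r+1$ and its transformations), then run \textbf{CollisionCheck}$(4,E_2)$, extract all exceptional $(r,s)$, and read off the counts and the membership of $r$ in the stated finite list. The paper's own ``proof'' is exactly this one sentence of computation, with the GCD bookkeeping done once for both two- and three-row cases via \textbf{PossibleGCDs}$(8,E_2)$ (subsuming your $L=4$ run).

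One small inaccuracy in your plan: the final claim (``if any of the four equations in Lemma \ref{tworows} have a solution, then the ten equations listed above do not'') concerns \emph{nonexceptional} $(r,s)$, not the $155$ exceptional pairs. Verifying it by plugging the $155$ exceptional pairs into the ten families is checking the wrong population; those pairs are precisely the ones allowed to misbehave. The claim is in fact already established by \textbf{CollisionCheck}$(4,E_2)$ itself, since $E_2$ includes pairs with one triple drawn from $T_1$ and one from $T_2\setminus T_1$, so any simultaneous two-row/three-row solution would be caught and landed on the exceptional list. Your extra cross-check is thus both misdirected and unnecessary, but this does not affect the soundness of the overall approach.
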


\noindent Unlike the two-row case, collisions in $G_2=\{r^ms^n: 0\leq m \leq 3, 0\leq n \leq 2\}$ can be translated both horizontally or vertically, but the dimensions limit us to three horizontal choices (scaling by $1,r,$ or $r^2)$ and two vertical choices (scaling by $1$ or $s$). In other words, a single family of collisions deducts at most $6$ from the maximum value of $A+A$. Further, for a double collision, the family corresponding to $1+r^3=s+rs$ has one horizontal choices and two vertical choices, while the family corresponding to $1+r^2=r+s$ has two horizontal choices and two vertical choices, so again the maximum damage is $2+4=6$. We illustrate these worst-case scenarios in Figure \ref{colfigure2} and summarize the result in the following corollary. 

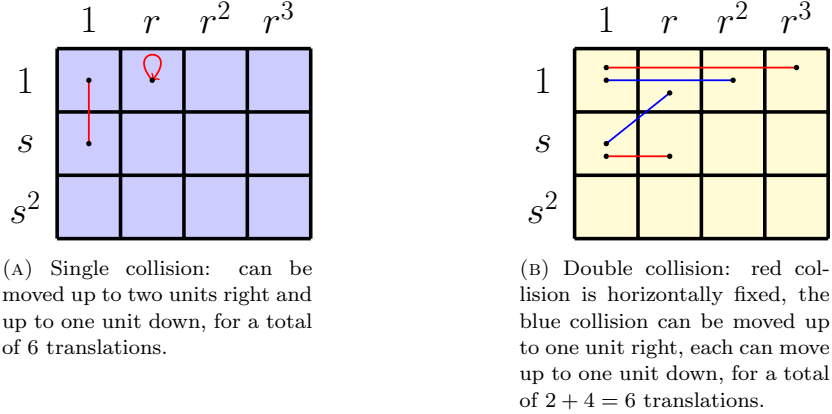
\begin{figure}[H]
\captionsetup[subfigure]{font=footnotesize}
\centering 
\hfill 
\begin{subfigure}[t]{0.25\textwidth}
\centering
\resizebox{\linewidth}{!}{%
\begin{tikzpicture}

    \fill[blue!20] (0,2) rectangle (4,-1);

    \draw[step=1cm, black, ultra thick] (0,2) grid (4,-1)
        (0.5,2.45) node [ultra thick, black] {\huge $1$} 
        (1.5,2.38) node [ultra thick, black] {\huge $r$} 
        (2.5,2.5) node [ultra thick, black] {\huge $r^2$} 
        (3.5,2.5) node [ultra thick, black] {\huge $r^3$} 
        (-0.5,1.5) node [ultra thick, black] {\huge $1$} 
        (-0.5,0.5) node [ultra thick, black] {\huge $s$}
		(-0.5,-0.5) node [ultra thick, black] {\huge $s^2$};

    \coordinate (c11) at (0.5,1.5);   
    \coordinate (cr31) at (3.5,1.7);  
    \coordinate (c1s) at (0.5,0.5);   
   
    \coordinate (crs) at (1.5,0.3);   
    \coordinate (cr21) at (2.5,1.5);  
    \coordinate (cr1) at (1.5,1.5);   

    \draw[red, thick] (c11) -- (c1s); 
    \draw[->, red, thick, out=130, in=50, distance=0.7cm] (cr1) to (cr1);

    \foreach \pt in {c11,cr1,c1s} {
        \fill[black] (\pt) circle (1.2pt);
    }

\end{tikzpicture}}
\caption{Single collision: can be moved up to two units right and up to one unit down, for a total of $6$ translations.}
\end{subfigure}\hfill
\begin{subfigure}[t]{0.25\textwidth}
\centering
\resizebox{\linewidth}{!}{%
\begin{tikzpicture}

    \fill[yellow!20] (0,2) rectangle (4,-1);

    \draw[step=1cm, black, ultra thick] (0,2) grid (4,-1)
        (0.5,2.45) node [ultra thick, black] {\huge $1$} 
        (1.5,2.38) node [ultra thick, black] {\huge $r$} 
        (2.5,2.5) node [ultra thick, black] {\huge $r^2$} 
        (3.5,2.5) node [ultra thick, black] {\huge $r^3$} 
       
        (-0.5,1.5) node [ultra thick, black] {\huge $1$} 
        (-0.5,0.5) node [ultra thick, black] {\huge $s$}
		(-0.5,-0.5) node [ultra thick, black] {\huge $s^2$};

    \coordinate (c11) at (0.5,1.5);   
    \coordinate (c11a) at (0.5,1.7);   
    \coordinate (cr31) at (3.5,1.7);  
    \coordinate (c1s) at (0.5,0.5);   
    \coordinate (c1sa) at (0.5,0.3);   
    \coordinate (crs) at (1.5,0.3);   
    \coordinate (cr21) at (2.5,1.5);  
    \coordinate (cr1) at (1.5,1.5);   
    \coordinate (cr1a) at (1.5,1.3);   

    \draw[red, thick] (c11a) -- (cr31); 
    \draw[red, thick] (crs) -- (c1sa);  

    \draw[blue, thick] (c11) -- (cr21); 
    \draw[blue, thick] (cr1a) -- (c1s);  

    \foreach \pt in {c11,c11a,cr31,c1s,c1sa,crs,cr21,cr1a} {
        \fill[black] (\pt) circle (1.2pt);
    }

\end{tikzpicture}}
\caption{Double collision: red collision is horizontally fixed, the blue collision can be moved up to one unit right, each can move up to one unit down, for a total of $2+4=6$ translations.}
\end{subfigure}\hfill \
\caption{Single and double three-row collisions, showing maximum of $6$ collisions in $G_2$.}
\label{colfigure2}
\end{figure}
 
\begin{corollary} \label{3rowcor} Suppose $r,s\in \Q$ with $r,s>1$, $\log_r(s)\notin \Q$, and $(r,s)$ is not an exceptional pair as defined in Lemma \ref{tworows} or Lemma \ref{threerows}. If $A\subseteq \{r^ms^n: 0\leq m \leq 3, 0\leq n \leq 2\}$ with $|A|=k$, then $$|A+A|\geq \frac{k^2+k}{2}-6.$$

\end{corollary}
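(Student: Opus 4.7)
The plan is to combine the Sidon structure of each row of $G_2$ with the collision classification provided by Lemmas \ref{tworows} and \ref{threerows}, paralleling the argument that yielded Corollary \ref{2rowcor} for $G_1$. I would first partition $A = B_0 \cup B_1 \cup B_2$ where $B_i = A \cap \{r^m s^i : 0 \leq m \leq 3\}$, and set $k_i = |B_i|$, so $k_0 + k_1 + k_2 = k$. By Lemma \ref{sg}, each $B_i$ is a Sidon set, since it lies in a geometric progression with rational common ratio $r \neq -2$. Were there no unordered coincidences of sums in $A+A$ beyond the trivial $x+y = y+x$, we would immediately have $|A+A| = k(k+1)/2$; the shortfall from this maximum equals the number of nontrivial ``collisions''.

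Next, I would observe that every nontrivial collision, after dividing out a common power of $r$ and $s$ and normalizing minimum exponents to $0$, corresponds to a solution of one of the fourteen equations enumerated across Lemmas \ref{tworows} and \ref{threerows}. Since $(r, s)$ avoids the exceptional pairs of both lemmas, either a single collision family is active, or we are in the double collision case (the $1+r^3=s+rs$, $1+r^2=r+s$ system from Lemma \ref{tworows}, with $s$ possibly replaced by $s^2$) embedded in $G_2$; the final sentence of Lemma \ref{threerows} further precludes simultaneous two-row and three-row collisions. The counting step then bounds the translations of a single collision family inside the $4 \times 3$ grid $G_2$: after normalization, a collision can be shifted by $r^j s^m$ with $(j, m) \in \{0, 1, 2\} \times \{0, 1\}$, giving at most $3 \times 2 = 6$ translations, each contributing exactly one additional coincidence. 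In the double collision case, as illustrated in Figure \ref{colfigure2}(B), the two families fit with combined translations $2 + 4 = 6$. Either way, we conclude $|A+A| \geq k(k+1)/2 - 6$.

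The hardest part, I expect, is the bookkeeping that each enumerated translation of each family contributes an independent reduction of $|A+A|$ by exactly $1$, with no hidden double counting between distinct families, nor between a family and its own translations. This will hinge on the non-redundancy conditions built into the equation families of both lemmas and on the correctness of the CollisionCheck output classifying simultaneous collisions. Once those are in hand, the remainder of the argument is a direct parallel of Corollary \ref{2rowcor}'s reasoning, scaled up to three rows, with the $3 \times 2$ count of in-grid translations replacing the $\min(k_1-1, k_2)$ term that arose in the two-row setting.
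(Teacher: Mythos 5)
Your proposal is correct and follows essentially the same route as the paper: bound $\tfrac{k^2+k}{2}-|A+A|$ by the number of in-grid translations of the (at most one or two) collision families guaranteed by Lemmas \ref{tworows} and \ref{threerows}, using the $4\times 3$ grid dimensions to cap translations at $3\times 2=6$ for a single family and $2+4=6$ for the double collision, and invoking the final sentence of Lemma \ref{threerows} to rule out simultaneous two-row and three-row collisions. The only small gloss to add is that ``shifted by $r^j s^m$ with $(j,m)\in\{0,1,2\}\times\{0,1\}$'' is an upper bound rather than the exact translation set: the actual horizontal (resp.\ vertical) translation count is $4-m_r$ (resp.\ $3-m_s$), where $m_r\geq 1$ and $m_s\geq 1$ are the maximal normalized $r$- and $s$-exponents in the collision, but these always multiply to at most $6$, which is what you and the paper both use.
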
 

\noindent Applying Corollaries \ref{2rowcor} and \ref{3rowcor} with $k=9$, we find that every $9$-element subset of $G_1$ or $G_2$ determines at least $39$ distinct sums, apart from the stipulated exceptional $(r,s)$ pairs. To complete the proof, we 
collect all exceptional pairs from Lemmas \ref{tworows} and \ref{threerows}, plug them all in to each $10$ and $11$ element two-dimensional geometric progression configuration collected via the computations outlined in Section \ref{spsc}. As promised, we find all $10$-element sets determine at least $31$ distinct sums, and all $11$-element sets determine at least 35 distinct sums, except scalings of $\{1, 2, 3, 4, 6, 8, 9, 12, 16, 18\}$ and  $\{1, 2, 3, 4, 6, 8, 9, 12, 16, 18, 24\}$, respectively.

\section{Uniqueness} \label{usec}

While the arguments and computations in Sections \ref{spsc} and \ref{ccs} conclusively show $SP(10)=30$ and $SP(11)=34$, and the conclusion of Section \ref{ccs} includes a uniqueness statement, this is not quite enough to establish uniqueness of $A\subseteq \N$ with $|A|=10$ (resp. $11$) and $\max\{|A+A|,|AA|\}=30$ (resp. $34$). This shortcoming is because our initial assumption, as seen in the outline in Section \ref{outline}, is $|AA|\leq 29$ (resp. $33$), as that is required for a potential `dethronement' of our known examples. In particular, our work in Sections \ref{spsc} and \ref{ccs} does not rule out the existence of $A\subseteq \N$ with $|A|=10$ (resp. $11$), $|AA|=30$, (resp. $34$), and $|A+A|\leq 30$ (resp. $34$). Below, we quickly outline a repeat of our approach under the weaker assumption $|AA|\leq 30$ (resp. $34$). While we are less detailed in our classifications, particularly for $10$-element sets, the overall argument extends without obstruction.

\begin{enumerate} \item \label{7u} If $L\subseteq \R$ is reduced with $|L|=7$ and $|L+L|\leq 18=3(7)-3$, then by Theorem \ref{Fclass2}, $L$ is either contained in a a single arithmetic progression, or $L$ is a union of two arithmetic progressions of the same step size. In the latter case, we can assume $L=\{m+n\alpha: (m,n)\in P\}$ where $\alpha\notin \Q$ and $P=\{(0,0),\dots,(k_1-1,0),(0,1),\dots,(k_2-1,0)\}$, $k_1+k_2=7$, $k_1\geq k_2$.

\

\item \label{8u} If $L\subseteq \R$ is reduced with $|L|=8$ and $|L+L|\leq 22$, and $L$ is not contained in a single arithmetic progression, then either $L$ is output by WinnersSearch$(8,22)$ (runtime $1$ second), or is obtained by adding a single element to a set from item (\ref{7u}). Through brute force search in the latter case, we find $L=\{m+n\alpha: (m,n)\in P\}$ where $\alpha\notin \Q$ and $P$ takes one of the following forms up to similarity:  
\begin{enumerate}[(i)] \item $\{(0,0),\dots,(k_1-1,0),(0,1),\dots,(k_2-1,0)\}$, $k_1+k_2=8$, $k_1\geq k_2$,
\item $\{(0,0),\dots,(5,0),(7,0),(0,1)\}$ or $\{(0,0),\dots,(5,0),(0,1),(2,1)\}$,
\item $\{(0,0),\dots,(k_1-1,0),(0,1),\dots,(k_2-1,1), (0,2),\dots,(k_3-1,2)\}, \\ (k_1,k_2,k_3)\in \{(4,3,1),(3,3,2)\}.$
\end{enumerate}
 
\

\item \label{9u} If $L\subseteq \R$ is reduced with $|L|=9$ and $|L+L|\leq 26$, and $L$ does not contain $8$ elements in a single arithmetic progression, then either $L$ is output by WinnersSearch$(9,26)$ (runtime $7$ seconds), or can be obtained by adding a single element to a set from item (\ref{8u}). Through brute force search in the latter case, we find that $L=\{m+n\alpha: (m,n)\in P\}$ where $\alpha\notin \Q$ and $P$, up to similarity, takes one of the forms listed in item (3) of Lemma \ref{925sum}, or one of $14$ other forms, encoded as follows: $(7,1\sq\sq 1), (6\sq 1, 2),$ $(6,2\sq 1), (6\sq1, 1\sq 1), (6, 1\sq1\sq1), (5\sq1,3),(5,3\sq1), (4\sq1,4), (4,4,1),(4,4,\sq1),(3,4,2), (3,4,\sq2),\\(4,3,\sq2),(3,3,\sq3)$. Here the  three coordinate positions represent the rows $n=0,1,2$, respectively, the numbers in each coordinate represent lengths of arithmetic progressions with step $(1,0)$, and squares represent gaps. For example, $(6\sq1, 1\sq 1)$ encodes $P=\{(0,0),\dots,(5,0),(7,0),(0,1),(2,1)\}$.

\

\item \label{10u} If $L\subseteq \R$ is reduced with $|L|=10$ and $|L+L|\leq 30$, and $L$ does not contain $8$ elements in a single arithmetic progression, then either $L$ is output by WinnersSearch$(10,30)$ (runtime $1$ minute), or can be obtained by adding a single element to a set from item (\ref{9u}). Through brute force search in the latter case, we find that $L=\{m+n\alpha: (m,n)\in P\}$ where $\alpha\notin \Q$ and $P$, up to similarity, takes one of the forms listed in item (3) of Lemma \ref{1029sum}, which determine at most $29$ sums, or one of a few dozen other forms determining exactly $30$ sums, which we do not enumerate here but are easily computed and stored, and which are each similar to a configuration containing at least $9$ elements of $\{(m,n): 0\leq m \leq 7, n\in \{0,1\}\}$ or at least $9$ elements of $\{(m,n): 0\leq m \leq 3, 0\leq n \leq 2\}$.

\

\item \label{11u} If $L\subseteq \R$ is reduced with $|L|=11$ and $|L+L|\leq 34$, and $L$ does not contain $8$ elements in a single arithmetic progression, then either $L$ is output by WinnersSearch$(11,34)$ (runtime $6$ minutes), or can be obtained by adding a single element to a set from item (\ref{9u}). Through brute force search in the latter case, we find that $L=\{m+n\alpha: (m,n)\in P\}$ where $\alpha\notin \Q$ and $P$, up to similarity, takes one of the forms found (but not explicitly enumerated) in Section \ref{spsc}, which determine at most $33$ sums, or one of a few dozen other forms determining exactly $34$ sums, which we do not enumerate here but are easily computed and stored, and which are each similar to a configuration containing at least $9$ elements of $\{(m,n): 0\leq m \leq 7, n\in \{0,1\}\}$ or at least $9$ elements of $\{(m,n): 0\leq m \leq 3, 0\leq n \leq 2\}$.  

\end{enumerate}

As in the conclusion of Section \ref{ccs},  we collect all exceptional $(r,s)$ pairs from Lemmas \ref{tworows} and \ref{threerows}, and compute the number of distinct sums determined by $\{r^ms^n: (m,n)\in P\}$, this time for all $P$ collected in items (\ref{10u}) and (\ref{11u}) above. Once again, we find all $10$-element sets determine at least $31$ distinct sums, and all $11$-element sets determine at least 35 distinct sums, except scalings of $\{1, 2, 3, 4, 6, 8, 9, 12, 16, 18\}$ and  $\{1, 2, 3, 4, 6, 8, 9, 12, 16, 18, 24\}$, respectively, establishing the uniqueness claimed in Theorem \ref{mainNew}.

\section{Future work}

The path beyond $k=11$ appears treacherous. The best observed example for $k=12$ is $$A=\{1,2,3,4,6,8,9,12,16,18,24,32\},$$ which has $|AA|=35$, $|A+A|=41$, leading to the conjecture $SP(12)=41$. However, the gap between $40=12(3)+4$ and the territory covered by Theorems \ref{Fclass1} and \ref{Fclass2} is likely too wide for the methods of this paper to be effective on their own. One optimistic observation is that $A\subseteq (0,\infty)$ with $|A|=12$ and $|AA|\leq 41$ is indeed forced to be contained in a one or two-dimensional geometric progression. We establish this with the following more general result, which is morally similar to a lemma of Freiman \cite[p. 24]{Frei73}.

\begin{theorem}\label{Fdim} Suppose $L\subseteq \R$ with $|L|=k$ and $d\in \N$. If $|L+L|< (d+2)k-(d+1)(d+2)/2$, then, up to translation, either \begin{enumerate}[(i)] \item $L\subseteq \{a_1\alpha_1+\cdots +a_{d'}\alpha_{d'}: a_1,\dots,a_{d'}\in \Z\}$, $d'< d$, $\alpha_1,\dots,\alpha_d'\in \R$, or 
\item $L\subseteq \{a_1\alpha_1+\cdots +a_{d}\alpha_{d}: a_1,\dots,a_{d}\in \Z\}$, $0\in L$, and $\alpha_1,\dots,\alpha_{d}\in L$ linearly independent over $\Q$.
\end{enumerate}
\end{theorem}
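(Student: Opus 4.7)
The plan is to proceed by induction on $d$, using Theorem \ref{Fclass1} as the base case and Freiman-style dimension counting in the inductive step.

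\textbf{Base case} $d=1$: The hypothesis reads $|L+L|\leq 3k-4$, so Theorem \ref{Fclass1} places $L$ in an AP of length at most $2k-3$. A pigeonhole argument ($k$ elements in at most $2k-3$ consecutive terms must include two consecutive ones) gives adjacent AP terms $x,x+\delta \in L$, and translating by $-x$ yields conclusion (ii) with $\alpha_1=\delta$.

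\textbf{Inductive step} $d\geq 2$: Assume the theorem at level $d-1$. If in fact $|L+L| < (d+1)k - d(d+1)/2$, the induction hypothesis places $L$ (up to translation) in a lattice of rank at most $d-1$, which is conclusion (i) of the present theorem. Otherwise we are in the intermediate range $(d+1)k - d(d+1)/2 \leq |L+L| < (d+2)k - (d+1)(d+2)/2$. Translate so $0\in L$ and argue by contradiction: if the $\Z$-rank of $L$ exceeds $d$, pick $\alpha_1,\ldots,\alpha_{d+1}\in L$ linearly independent over $\Q$, and set $L_0 = \{0,\alpha_1,\ldots,\alpha_{d+1}\}$, so $|L_0+L_0| = \binom{d+3}{2}$ by linear independence. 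Order $L\setminus L_0$ as $l_{d+2},\ldots,l_{k-1}$ by iteratively peeling vertices of the successive convex hulls $\operatorname{conv}(L_j)$, where $L_j = L_0 \cup \{l_{d+2},\ldots,l_j\}$. A supporting-hyperplane argument at each vertex $l_j$ shows that $2l_j$ is the unique extremum of $L_j+L_j$ in some direction, while each of the $\geq d+1$ edges of $\operatorname{conv}(L_j)$ incident to $l_j$ yields a further extremal sum $l_j+v$ that is not in $L_{j-1}+L_{j-1}$; consequently $|L_j+L_j| - |L_{j-1}+L_{j-1}| \geq d+2$. Summing,
\[
|L+L| \;\geq\; \binom{d+3}{2} + (d+2)(k-d-2) \;=\; (d+2)k - \tfrac{(d+1)(d+2)}{2},
\]
contradicting the hypothesis.

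With the $\Z$-rank thus bounded by $d$, conclusion (i) handles the case of rank strictly less than $d$. For rank exactly $d$, a higher-dimensional analogue of the pigeonhole argument used in the base case places a $\Z$-basis $\alpha_1,\ldots,\alpha_d$ of the lattice inside an appropriate translate of $L$, yielding (ii). The main technical obstacle is handling points $l\in L\setminus L_0$ that lie in the relative interior of $\operatorname{conv}(L_0)$ and therefore cannot be peeled as vertices of any intermediate hull. A local coefficient analysis in the basis $\{\alpha_k\}$ — expressing any putative overlap $l+\alpha_i = \alpha_a+\alpha_b$ or $2l = \alpha_a+\alpha_b$ and matching coefficients under $\Q$-linear independence — shows that for $l\notin L_0$ at most one of the $d+3$ sums $\{2l\}\cup(l+L_0)$ can lie in $L_0+L_0$, and this local bound must then be combined with an injectivity/inclusion-exclusion check across distinct interior points to recover the $\geq d+2$ per-element contribution. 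Pinning down this interior-point analysis, together with the analogous higher-dimensional pigeonhole required in the rank-$d$ endgame, constitutes the main technical burden.
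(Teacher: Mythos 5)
Your proposal takes a genuinely different route from the paper, but it has several gaps that are substantive rather than routine.

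The paper's proof is a double induction on $d+k$: after translating so $\min(L)=0$, it iteratively selects $\alpha_1,\dots,\alpha_d$ (and possibly $\alpha_{d+1}$) as the least elements of $L$ not yet in the lattice generated so far, observes that $0,\alpha_1,\dots,\alpha_{d+1}$ are all sums in $(L+L)\setminus(L'+L')$ where $L'=L\setminus\{0\}$, and applies the inductive hypotheses for $(d,k-1)$ and $(d-1,k-1)$ to $L'$. The endgame where the lattice basis must be found \emph{inside} $L$ (needed for conclusion (ii)) is handled automatically by this construction, since the $\alpha_i$ are by definition chosen from $L$. Your proposal instead inducts on $d$ alone and replaces the deletion argument with a Freiman-dimension/convex-peeling bound plus a direct attack on the rank-$d$ endgame.

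The gaps. First, the peeling ordering does not exist as stated: you cannot choose to peel down to a prescribed subset $L_0=\{0,\alpha_1,\dots,\alpha_{d+1}\}$. The vertices removable at each stage are dictated by the geometry of the current hull, and nothing prevents some $\alpha_i$ from being a vertex of $\operatorname{conv}(L_j)$ that must be peeled before you reach $L_0$, or some $l_j$ from lying in the interior of every intermediate hull and hence never being peelable. You acknowledge the interior-point problem but defer it, and the deferral is not cosmetic: the proposed ``local coefficient analysis'' handles one interior point against $L_0$, but says nothing about collisions between the sums contributed by two \emph{distinct} interior points $l,l'$ (e.g.\ $l+\alpha_i=l'+\alpha_j$), which is exactly where the per-point $d+2$ count can fail. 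The ``injectivity/inclusion-exclusion check'' you invoke is the whole difficulty, not a cleanup step. Second, the rank-$d$ endgame — producing a $\Z$-basis of the ambient lattice that lies entirely in a translate of $L$ — is not a ``higher-dimensional pigeonhole.'' Even in rank $1$ it already required Theorem~\ref{Fclass1} plus the consecutive-terms argument; in higher rank there is no analogous containment result to pigeonhole against, and examples such as $L=\{0,2,3,2\alpha,3\alpha,2+2\alpha\}$ (which generates $\Z+\Z\alpha$ but contains no $\Z$-basis of it) show that the conclusion is genuinely sensitive to the sumset hypothesis, not a soft consequence of bounded rank. The paper sidesteps this by getting the basis for free from the iterative construction and the inductive hypothesis applied to $L\setminus\{0\}$; your framework has no mechanism that plays this role. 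Until both the interior-point aggregation and the rank-$d$ endgame are supplied, the argument is incomplete.
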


\begin{proof} We induct on $d+k$. The result holds vacuously when $k=0$, as $0>-(d+1)(d+2)/2$, and  when $d=0$, as $|L+L|\geq 2k-1$ when $|L|=k$. These base cases suffice, but for nonvacuous base cases, the result is trivial when $k=1$ (take $\alpha_1$ to be the single element of $L$), and holds when $d=1$ by Theorem \ref{Fclass1}.  

\noindent Now suppose $d,k\geq 1$, and assume the result holds for the pairs $(d,k-1)$ and $(d-1,k-1)$. Suppose $L\subseteq \R$ with $|L|=k$ and $|L+L|< (d+2)k-(d+1)(d+2)/2$. For the remainder of the proof, for $\alpha_1,\dots,\alpha_i\in \R$, we let $P(\alpha_1,\dots,\alpha_i)=\{a_1\alpha_1+\cdots+a_{i}\alpha_{i}: a_1,\dots,a_{i}\in \Z\}$. 

\noindent After translation, assume $\min(L)=0=\alpha_0$, and iteratively define $\alpha_i$ as the minimum element of $L\setminus P(\alpha_1,\dots,\alpha_{i-1})$, terminating either with $i=d'<d$ and $L\subseteq P(\alpha_1,\dots,\alpha_{d'})$, in which case $L$ fits (i) and we are done, or with $i=d$.

\noindent In the latter case, first suppose $L\subseteq P(\alpha_1,\dots,\alpha_d)$. If $\alpha_1,\dots,\alpha_d$ are linearly independent over $\Q$, then $L$ fits (ii). If instead $\alpha_{j+1}=(a_1/b_1)\alpha_1+\cdots+(a_{j}/b_{j})\alpha_{j}$ for some $1\leq j \leq d-1$ with $a_1,\dots,a_{j}\in \Z$ and $b_1,\dots,b_j\in \N$, then $L\subseteq P(\alpha_1/b_1,\dots,\alpha_j/b_j,\alpha_{j+2},\dots,\alpha_d)$, so $L$ fits (i).

\noindent For what follows, suppose $L\not\subseteq P(\alpha_1,\dots,\alpha_d)$, let $\alpha_{d+1}$ be the minimum element of $L\setminus P(\alpha_1,\dots,\alpha_d)$, and let $L' = L\setminus\{0\}$, so $|L'|=k-1$. By construction, $0,\alpha_1,\dots,\alpha_{d+1} \in (L+L)\setminus (L'+L')$, hence  $$|L'+L'|\leq |L+L|-(d+2) < (d+2)(k-1)-(d+1)(d+2)/2.$$
\noindent Applying inductive hypotheses  for  $(d-1,k-1)$ and $(d,k-1)$, translating $L'$ if necessary, we have that one of the following holds:
\begin{enumerate}[(I)] \item $L'\subseteq P(\beta_1,\dots,\beta_{d'}), \ d'<d-1, \  \beta_1,\dots,\beta_{d'}\in \R, $
\item  $L'\subseteq P(\beta_1,\dots,\beta_{d-1}), \ \beta_1,\dots,\beta_{d-1}\in L'$ linearly independent over $\Q$
\item $L'\subseteq P(\beta_1,\dots,\beta_{d-1}), \  \beta_1,\dots,\beta_{d-1}\in \R, \ |L'+L'|\geq (d+1)(k-1)-d(d+1)/2,$
\item $L'\subseteq P(\beta_1,\dots,\beta_{d}), \ \beta_1,\dots,\beta_{d}\in L'$ linearly independent over $\Q$, $|L'+L'|\geq (d+1)(k-1)-d(d+1)/2$.
\end{enumerate} 
\noindent Let $\gamma$ be such that $L'\cup\{\gamma\}$ is a translation of $L$, which we now identify with $L$, so $\gamma=\min(L)$. In case (I), $L\subseteq P(\beta_1,\dots,\beta_{d'},\gamma)$, and $L$ fits (i). In case (II), $L\subseteq P(\beta_1,\dots,\beta_{d-1},\gamma)$, so if $\gamma$ is not in the $\Q$-span of $\beta_1,\dots,\beta_{d-1}$, then $L$ fits (ii). Otherwise, $L\subseteq P(\beta_1/b_1,\dots,\beta_{d-1}/b_{d-1})$ for $b_1,\dots,b_{d-1}\in \N$, so $L$ fits (i). 

\noindent Finally, we treat cases (III) and (IV) together. If $\gamma\in P(\beta_1,\dots,\beta_{d-1})$ (resp. $\gamma\in P(\beta_1,\dots,\beta_{d})$), then $L$ fits (i) (resp. (ii)). Otherwise, since $P$ is closed under addition and subtraction, $\gamma + x \notin L'+L'$ for all $x\in L'$, and also $2\gamma$ is less than all elements of $L'+L'$. In particular, $$|L+L|\geq |L'+L'|+k \geq (d+1)(k-1)-d(d+1)/2+ k = (d+2)(k) -(d+1)(d+2)/2, $$ contradicting our assumption on $|L+L|$ and completing the proof.
\end{proof}

\

\noindent Applying Theorem \ref{Fdim} on the product side with $d=2$ yields the following, which is applicable for $|A|=12$, $|AA|\leq 41$.

\begin{corollary} Suppose $A\subseteq (0,\infty)$ with $|A|=k$. If $|AA|\leq 4k-7$, then either $A$ is contained in a geometric progression or, up to scaling, $A\subseteq \{r^as^b: a,b\in \Z\},$ where $1,r,s\in A$ and $\log_r(s)\notin \Q$. 
\end{corollary}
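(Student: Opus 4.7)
The plan is to apply Theorem \ref{Fdim} on the product side with $d=2$ via the standard logarithm trick. Since $A\subseteq (0,\infty)$, I set $L=\{\log(a):a\in A\}$, so $|L|=k$ and $|L+L|=|AA|$. With $d=2$, the threshold in Theorem \ref{Fdim} becomes $(d+2)k-(d+1)(d+2)/2=4k-6$, so the hypothesis $|AA|\leq 4k-7$ gives $|L+L|<4k-6$, putting $L$ under the hypotheses of Theorem \ref{Fdim}.

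Next, I read off the two possible conclusions from Theorem \ref{Fdim}. In case (i), $L$ is contained, up to translation, in $\{a_1\alpha_1+\cdots+a_{d'}\alpha_{d'}: a_1,\dots,a_{d'}\in \Z\}$ with $d'<d=2$, so $d'\in\{0,1\}$. In either subcase $L$ lies in a single one-dimensional lattice (with $d'=0$ giving a singleton), which is an arithmetic progression. Exponentiating and noting that translating $L$ corresponds to scaling $A$, this means $A$ is contained in a geometric progression. In case (ii), $L$ lies in $\{a_1\alpha_1+a_2\alpha_2:a_1,a_2\in\Z\}$ with $0\in L$ and $\alpha_1,\alpha_2\in L$ linearly independent over $\Q$. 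Setting $r=e^{\alpha_1}$, $s=e^{\alpha_2}$ and scaling $A$ so that the translation sending $\min(L)$ to $0$ corresponds to scaling $\min(A)$ to $1$, I obtain $A\subseteq\{r^as^b:a,b\in\Z\}$ with $1,r,s\in A$.

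The final verification is that $\log_r(s)=\alpha_2/\alpha_1$ is irrational, which is exactly the statement that $\alpha_1,\alpha_2$ are linearly independent over $\Q$. This completes the translation of Theorem \ref{Fdim} into the multiplicative statement of the corollary, matching precisely the two alternatives claimed. Since the result is a direct specialization, I do not anticipate any serious obstacle: the only minor bookkeeping is confirming the numerics $4k-7<4k-6$ and the identification of additive translation with multiplicative scaling; the structural content is entirely carried by Theorem \ref{Fdim}.
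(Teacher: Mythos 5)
Your proof is correct and takes exactly the route the paper intends: the paper does not spell out a proof of this corollary, but merely asserts it follows by ``applying Theorem \ref{Fdim} on the product side with $d=2$,'' which is precisely what you do via the logarithm dictionary. The numerics ($4k-7<4k-6$), the translation/scaling correspondence, and the equivalence between $\Q$-linear independence of $\alpha_1,\alpha_2$ (with $\alpha_1\neq 0$) and irrationality of $\alpha_2/\alpha_1$ all check out.
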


\noindent Alas, $k=13$ appears to be the breaking point at which two-dimensional product structures no longer reign supreme for minimizing $\max\{|A+A|,|AA|\}$, as the best observed example is $$A=\{1,2,3,4,5,6,8,10,12,16,20,24,32\},$$ which satisfies $|A+A|=43$, $|AA|=46$, leading to the conjecture $SP(13)=46$. Note that this example can be written as a three-dimensional structure $\{2^a3^b5^c: (a,b,c)\in P\}$, where $$P=\{(0,0,0),(1,0,0),\dots,(5,0,0),(0,1,0),(1,1,0),(2,1,0),(3,1,0),(0,0,1),(1,0,1),(2,0,1) \},$$ and it is the union of three geometric progressions with the same common ratio $$\{1,2,4,8,16,32\}\cup\{3,6,12,24\}\cup\{5,10,20\},$$ but the starting points $\{1,3,5\}$ of the geometric progressions form an arithmetic, rather than geometric, progression. The development of new techniques, both computational and theoretical, to further establish precise values of $SP(k)$ remains appealing. 

\newpage

\noindent \textbf{Acknowledgements:} This research was initiated during the Summer 2025 Kinnaird Institute Research Experience at Millsaps College. All authors were supported during the summer by the Kinnaird Endowment, gifted to the Millsaps College Department of Mathematics. At the time of submission, all authors except Alex Rice  were Millsaps College undergraduate students. Alex Rice was partially supported by an AMS-Simons Research Enhancement Grant for PUI faculty. Figures \ref{fig1}-\ref{colfigure2} were created in tikz, aided by output from ChatGPT 4 and 5, provided by OpenAI; no other LLM assistance was utilized in the project. Python computations were completed using Google Colab and local Jupyter notebooks on personal laptops.

\end{document}